\newtheorem{theorem}{Theorem}[section]
\newtheorem{lemma}[theorem]{Lemma}
\newtheorem{corollary}[theorem]{Corollary}
\newtheorem{proposition}[theorem]{Proposition}
\theoremstyle{definition}
\newtheorem{definition}[theorem]{Definition}
\theoremstyle{remark}
\newtheorem{remark}[theorem]{Remark}
\numberwithin{equation}{section}
\def\d{\mathrm{d}}
\def\I{\mathrm{I}}
\def\R{\mathrm{R}}
\def\Cset{\mathbb{C}}
\def\Kset{\mathbb{K}}
\def\Lset{\mathbb{L}}
\def\Pset{\mathbb{P}}
\def\Rset{\mathbb{R}}
\def\Zset{\mathbb{Z}}
\def\loc{\mathrm{loc}}
\def\mon{\mathrm{mon}}
\def\Im{\mathrm{Im}}
\def\Re{\mathrm{Re}}
\def\half{{\textstyle\frac{1}{2}}}
\def\third{{\textstyle\frac{1}{3}}}
\def\epsilon{\varepsilon}
\def\ie{{i.e.}}
\def\eg{{e.g.}}
\begin{document}

\title{Galoisian approach for a Sturm-Liouville problem on the infinite interval}
\author{David~Bl\'azquez-Sanz \& Kazuyuki Yagasaki
\thanks{This work was partially supported
 by the Japan Society for the Promotion of Science (JSPS)
 through Grant-in-Aid for JSPS Fellows No.~21$\cdot$09222.
DBS acknowledges support
 from MICINN-FEDER grant MTM2009-06973
 and CUR-DIUE grant 2009SGR859,
 and thanks his colleagues at Instituto de Matem\'aticas y sus Aplicaciones, 
 Universidad Sergio Arboleda, for helpful discussion and encouragement.
KY appreciates support from the JSPS
 through Grant-in-Aid for Scientific Research (C) Nos.~21540124 and 22540180.}}
\date{August 25, 2010}

\maketitle

\begin{abstract}
We study a Sturm-Liouville type eigenvalue problem
 for second-order differential equations 
 on the infinite interval $(-\infty,\infty)$.
Here the eigenfunctions are nonzero solutions exponentially decaying at infinity.
We prove that at any discrete eigenvalue
 the differential equations are integrable in the setting of differential Galois theory
 under general assumptions.
Our result is illustrated with two examples
 for a stationary Schr\"odinger equation having a generalized Hulth\'en potential
 and an eigenvalue problem for a traveling front in the Allen-Cahn equation.
\end{abstract}

MSC2000: Primary 34B09, 34B24; Secondary 35B35, 81Q05

\section{Introduction}

We study a Sturm-Liouville type problem
 for second-order differential equations of the form
\begin{equation}\label{SLTE}
\frac{\d^2\psi}{\d x^2}+\mu(x)\frac{\d\psi}{\d x}+\nu(x)\psi=\lambda\psi,\quad
\psi,\lambda\in\Cset,
\end{equation}
on the infinite interval $(-\infty,\infty)$ with boundary conditions
\begin{equation}\label{BC}
\lim_{x\to\pm\infty}\psi(x)=0,
\end{equation}
where $\mu,\nu:\Rset\to\Rset$ are analytic functions.
If the boundary value problem (\ref{SLTE},\ref{BC}) has a nonzero solution,
 then the value of $\lambda$ is called an \emph{eigenvalue}
 and the nonzero solution $\psi(x)$,
 which is easily shown to decay exponentially at infinity,
 is called the associated \emph{eigenfunction}.
It is also a well-known fact that a classical Sturm-Liouville problem
\[
-\frac{\d}{\d x}\left(p(x)\frac{\d\psi}{\d x}\right)+q(x)\psi
 =\lambda w(x)\psi,
\]
where $p,q,w:\Rset\to\Rset$ are analytic functions,
 can be casted into \eqref{SLTE} with $\mu(x)\equiv 0$
 under changes of independent and dependent variables.
See \cite{AHP05,Z05} and references therein for the history and general results
 on the Sturm-Liouville problem.
These types of equations arise in many mathematical and physical applications
 including stationary Schr\"odinger equations \cite{LL58}
 and eigenvalue problems for spectral stability of pulses and fronts
 in partial differential equations (PDEs) \cite{S02}.

In general, it is difficult to solve the eigenvalue problem
 (\ref{SLTE},\ref{BC}) analytically,
 and explicit solutions are obtained only in special cases.
For stationary Schr\"odinger equations, in which $\mu(x)\equiv 0$ in \eqref{SLTE},
 Acosta-Hum\'anez \cite{Acosta} recently studied the eigenvalue problem
 by means of differential Galois theory \cite{K57,VS03}.
Here the differential Galois theory
 is an extended version of the classical Galois theory,
 which treats the solvability of algebraic equations, for differential equations
 and deals with the problem of integrability by quadratures for them.
He computed such values of $\lambda$ as
 equation~\eqref{SLTE} with $\mu(x)\equiv 0$
 has a solvable differential Galois group for many examples
 and showed for some of them
 that the differential Galois group is solvable if $\lambda$ is an eigenvalue
 (see also \cite{Acosta2}).
In this paper, we show that this statement holds
 for \eqref{SLTE} with $\mu(x)\not\equiv 0$ under general assumptions.
More precisely, we state our main results as below.

\begin{figure}[t]
\centering\includegraphics[scale=0.8]{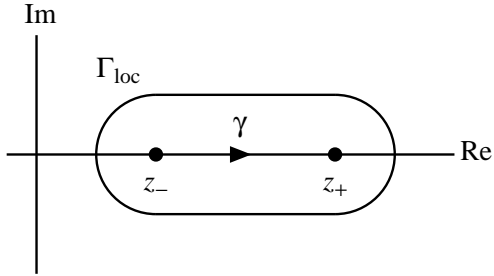}
\caption{Assumption~(A1) and a simply connected neighborhood $\Gamma_\loc$.
\label{fig:A1}}
\end{figure}

We first make the following assumptions:
\begin{enumerate}
\item[\bf(A1)]
Let $I\subset\Rset$ be an open interval.
There exist an analytic function $f:I\rightarrow\Rset$ and two points $z_\pm\in I$ 
 such that $f(z_\pm)=0$, $f'(z_\pm)\neq 0$ and
\[
\mu(x)=g(\gamma(x)),\quad
\nu(x)=h(\gamma(x)),
\]
where the prime represents differentiation with respect to $x$;
 $\gamma(x)$ is a heteroclinic solution in
\begin{equation}
\label{eqn:1d}
\frac{\d z}{\d x}=f(z),\quad
z\in\Rset.
\end{equation}
with $\lim_{x\rightarrow\pm\infty}\gamma(x)=z_\pm$;
 and $g(z),h(z)$ are meromorphic functions in an open set $U$
 containing $\{\gamma(x)\,|\,x\in\Rset\}\cup\{z_\pm\}$ in $\Cset$.
See Fig.~\ref{fig:A1}
\item[\bf(A2)]
The functions $g(z),h(z)$ are holomorphic at $z=z_\pm$.
\end{enumerate}

We easily see that $\mu(x)$ and $\nu(x)$, respectively,
 converge \emph{exponentially} to finite values
\[
\mu_\pm=g(z_\pm)
\quad\mbox{and}\quad
\nu_\pm=h(z_\pm)
\]
as $x\rightarrow\pm\infty$, since $f'(z_\pm)\neq 0$.
We also have $f'(z_+)<0$ and $f'(z_-)>0$ 
 since $z=z_+$ and $z=z_-$ must be a sink and source, respectively,
 in \eqref{eqn:1d}.

Under the transformation $z=\gamma(x)$,
 equation~\eqref{SLTE} is written as
\begin{equation}\label{CE}
\frac{\d^2\psi}{\d z^2}+\frac{g(z)+f'(z)}{f(z)}\frac{\d\psi}{\d z}
 +\frac{h(z)-\lambda}{f(z)^2}\psi=0,
\end{equation}
which is regarded as a complex differential equation
 with meromorphic coefficients and $\psi,z\in\Cset$.
Generally, a singular point in linear differential equations
 with meromorphic coefficients is called \emph{regular}
 if the growth of solutions along any ray approaching the singular point
 is bounded by a meromorphic function;
 otherwise it is called \emph{irregular}.
It is well known for second-order differential equations of the form~\eqref{CE}
 that a singular point $z_0$ is regular
 if the coefficients of $\d\psi/\d z$ and $\psi$
 are $O((z-z_0)^{-1})$ and $O((z-z_0)^{-2})$, respectively.
See, \eg, \cite{I56,WW27} for more details on this statement.
Hence, since $f'(z_\pm)\neq 0$ and $g(z),h(z)$ are holomorphic at $z_\pm$
 by assumptions~(A1) and (A2),
 we see that the singular points $z=z_\pm$ are regular in \eqref{CE}.

Let $\Gamma_\loc$ be a simply connected neighborhood
 of the path $\{\gamma(x)\,|\,x\in\Rset\}$ in $\Cset$ (see Fig.~\ref{fig:A1}).
We prove the following theorem.

\begin{theorem}
\label{thm:main}
Let $\lambda_\R=\Re(\lambda)$ and $\lambda_\I=\Im(\lambda)$.
Suppose that $z=z_\pm$ are the only singularities of \eqref{CE}
 in $\Gamma_\loc$ and
\begin{equation}
16\mu_\pm^2(\lambda_\R-\nu_\pm)+\lambda_\I^2>0.
\label{eqn:con}
\end{equation}
If the boundary value problem \emph{(\ref{SLTE},\ref{BC})} has a nonzero solution,
 then the restriction of \eqref{CE} onto $\Gamma_\loc$
 has a triangularizable differential Galois group.
\end{theorem}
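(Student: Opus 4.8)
The plan is to single out, near each of the two regular singular points $z_\pm$ of \eqref{CE}, the Frobenius solution that \eqref{BC} forces the eigenfunction to be proportional to, and then to use that solution to exhibit a line in the solution space left invariant by the differential Galois group. The first step is the local analysis: writing $f(z)=f'(z_\pm)(z-z_\pm)+O\bigl((z-z_\pm)^2\bigr)$ and using (A1)--(A2), the indicial equation of \eqref{CE} at $z_\pm$ is $\bigl(f'(z_\pm)\rho\bigr)^2+\mu_\pm\bigl(f'(z_\pm)\rho\bigr)+(\nu_\pm-\lambda)=0$, so $\sigma:=f'(z_\pm)\rho$ runs over the roots of $\sigma^2+\mu_\pm\sigma+(\nu_\pm-\lambda)$, the characteristic polynomial of the limiting constant-coefficient equation $\psi''+\mu_\pm\psi'+(\nu_\pm-\lambda)\psi=0$ of \eqref{SLTE} as $x\to\pm\infty$. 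Since $f'(z_\pm)\neq0$, along the heteroclinic orbit $z-z_\pm=\gamma(x)-z_\pm$ decays like $e^{f'(z_\pm)x}$, so a local solution $\psi\sim(z-z_\pm)^\rho$ of \eqref{CE} behaves like $e^{f'(z_\pm)\rho\,x}$; as $f'(z_+)<0<f'(z_-)$, it tends to $0$ (at $z_+$ as $x\to+\infty$, at $z_-$ as $x\to-\infty$) exactly when $\Re\rho>0$ and grows exponentially when $\Re\rho<0$.

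The second step, which I expect to be the crux, is to deduce from \eqref{eqn:con} that this decaying direction is one-dimensional at each end. After the substitution $\sigma=f'(z_\pm)\rho$, and a reflection $\sigma\mapsto-\sigma$ at $z_-$, the requirement $\Re\rho>0$ becomes $\Re\sigma<0$ for a root of a quadratic $\sigma^2+b_\pm\sigma+(\nu_\pm-\lambda)$ with $b_\pm\in\Rset$ and $b_\pm^2=\mu_\pm^2$. Plugging $\sigma=i\omega$ shows such a quadratic has a root on the imaginary axis precisely when $\mu_\pm^2(\lambda_\R-\nu_\pm)+\lambda_\I^2=0$, a locus disjoint from the set allowed by \eqref{eqn:con}; since the number of roots with negative real part is locally constant off that locus and equals one when $\lambda_\R$ is large --- a regime reached inside every connected component of the allowed set --- it equals one throughout. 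Hence at each of $z_\pm$ exactly one Frobenius exponent $\rho_\pm$ satisfies $\Re\rho_\pm>0$, and, being the exponent of larger real part, it carries a genuine logarithm-free Frobenius solution $y_\pm=(z-z_\pm)^{\rho_\pm}\phi_\pm(z)$ with $\phi_\pm$ holomorphic and nonvanishing at $z_\pm$.

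Finally I would bring in the eigenfunction. Let $\psi$ be a nonzero solution of \eqref{SLTE}--\eqref{BC}, regarded through $z=\gamma(x)$ as a solution of \eqref{CE} along the path. Expanding $\psi$ near $z_+$ in a Frobenius basis, its $y_+$-component decays while the complementary one grows exponentially, so \eqref{BC} forces the latter to vanish and $\psi\in\Cset\,y_+$; likewise $\psi\in\Cset\,y_-$. Hence $u:=\psi'/\psi$ is meromorphic on $\Gamma_\loc$ and, crucially, single-valued: continuing $\psi$ once around $z_+$ or $z_-$ --- the only singularities of \eqref{CE} inside the simply connected $\Gamma_\loc$ --- multiplies $\psi$ by a nonzero scalar and leaves $u$ unchanged. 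Thus $u$ lies in the base field of the restriction of \eqref{CE} to $\Gamma_\loc$, and since $\psi$ solves the first-order equation $\psi'=u\psi$, every element of the differential Galois group $G$ sends $\psi$ to a solution of the same equation, hence to a constant multiple of $\psi$; so the line $\Cset\,\psi$ in the two-dimensional solution space is $G$-invariant, and in a basis whose first vector is $\psi$ the group $G$ becomes upper-triangular, which is the assertion. The hypotheses enter in exactly two places --- the root count above, which is the origin of \eqref{eqn:con} and of the assumption that $z_\pm$ are the only singularities in $\Gamma_\loc$, and the resulting single-valuedness of $u$, which would fail if $\psi$ were a nontrivial superposition of Frobenius solutions at either end; both are guaranteed precisely by \eqref{eqn:con} together with \eqref{BC}.
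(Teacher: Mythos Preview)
Your argument is correct and follows the same overall arc as the paper's proof --- identify at each regular singular point $z_\pm$ the unique (up to scalar) Frobenius solution with exponent of positive real part, show via \eqref{BC} that the eigenfunction is proportional to both, and conclude triangularizability --- but it differs from the paper in two places worth recording.

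First, to separate the indicial exponents by the sign of their real part the paper proceeds algebraically: it shows that \eqref{eqn:con} implies $\Re\bigl(\sqrt{\mu_\pm^2+4(\lambda-\nu_\pm)}\bigr)>|\mu_\pm|$, from which the root splitting is immediate. Your route is topological --- detect when a root sits on the imaginary axis, check that this locus is disjoint from the region \eqref{eqn:con}, and carry the root count in from $\lambda_\R\to+\infty$ by continuity. Both are fine; the paper's is shorter, yours makes the role of \eqref{eqn:con} as a ``no imaginary root'' condition more transparent. Second, for the final step the paper observes that the two local monodromies $M_{\ell_\pm}$ share the eigenvector $\psi$, hence the monodromy group is triangularizable, and then invokes Corollary~\ref{TriangularG} (Schlesinger) to pass to the differential Galois group. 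You instead argue directly at the Galois level: since $\psi$ is a monodromy eigenvector at both $z_\pm$, the logarithmic derivative $u=\psi'/\psi$ is single-valued and meromorphic on $\Gamma_\loc$, hence lies in the base field, so every $\sigma\in G$ sends $\psi$ to another solution of $\psi'=u\psi$ and the line $\Cset\psi$ is $G$-invariant. This bypasses the appeal to Schlesinger's theorem at the cost of a short extra computation (meromorphy of $u$ at $z_\pm$), and is a perfectly good alternative.
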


Roughly speaking, this theorem means that
 if $\lambda$ is an eigenvalue satisfying \eqref{eqn:con},
 then equation~\eqref{SLTE} is integrable
 in the setting of the differential Galois theory. 
We will also see that an eigenvalue is not discrete
 if it does not satisfy \eqref{eqn:con} (see Remark~\ref{rmk:b}).

In some case
 all eigenvalues of the problem (\ref{SLTE},\ref{BC})
 have to satisfy condition~\eqref{eqn:con}.
Actually, we obtain the following result
 as a corollary of Theorem~\ref{thm:main}.

\begin{theorem}
\label{thm:main2}
Suppose that the two points $z=z_\pm$ are the only singularities of \eqref{CE}
 in $\Gamma_\loc$ and one of the following conditions holds:
\begin{enumerate}
\item[(i)] $\mu_\pm=0$;
\item[(ii)] $\mu_+=0$, $\mu_->0$;
\item[(iii)] $\mu_+<0$, $\mu_-=0$;
\item[(iv)] $\mu_+>0$, $\mu_-\ge 0$, $\nu_-\ge\nu_+$;
\item[(v)] $\mu_+\le 0$, $\mu_-<0$, $\nu_+\ge\nu_-$.
\end{enumerate}
Then the statement of Theorem~\ref{thm:main} holds.
\end{theorem}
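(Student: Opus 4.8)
The plan is to reduce Theorem~\ref{thm:main2} to the mechanism underlying Theorem~\ref{thm:main}. In that proof, hypothesis~\eqref{eqn:con} serves only to guarantee that, of the two Frobenius solutions of \eqref{CE} at each of the regular singular points $z_\pm$, exactly one is compatible with the boundary condition~\eqref{BC}; the eigenfunction is then, up to a scalar, that single solution near $z_+$ and near $z_-$, and hence a common eigenvector of the two local monodromy transformations. Since all singularities of \eqref{CE} in $\Gamma_\loc$ are regular and are exactly $z_\pm$, the monodromy group on $\Gamma_\loc$ is generated by these two transformations and is Zariski dense in the differential Galois group, so a common eigenvector spans a line invariant under the whole Galois group, which is therefore reducible, i.e. triangularizable. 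I would thus prove Theorem~\ref{thm:main2} by checking, in each of the cases (i)--(v), that this ``exactly one admissible Frobenius solution at each end'' property holds whenever \eqref{SLTE}--\eqref{BC} has a nonzero solution --- even where \eqref{eqn:con} itself fails, as it does for instance at the (necessarily real) eigenvalues that arise in case~(i).

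Near $\pm\infty$, equation~\eqref{SLTE} is modelled on the constant-coefficient equation $\psi''+\mu_\pm\psi'+(\nu_\pm-\lambda)\psi=0$, whose characteristic roots are $r^{\pm}_{1,2}=\tfrac12\bigl(-\mu_\pm\pm\sqrt{\mu_\pm^2+4(\lambda-\nu_\pm)}\bigr)$; the exponents of \eqref{CE} at $z_\pm$ are $r^{\pm}_{1,2}/f'(z_\pm)$, and, because $f'(z_+)<0$ and $f'(z_-)>0$, the condition~\eqref{BC} selects at $z_+$ the Frobenius solution whose exponent has positive real part (the decaying exponential $e^{rx}$ with $\Re r<0$ becomes $(z-z_+)^{r/f'(z_+)}$, of positive real part), and likewise at $z_-$. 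This selection is unambiguous precisely when at most one of $r^{\pm}_{1,2}$ has the ``wrong'' sign of real part, which, since $r^\pm_1+r^\pm_2=-\mu_\pm$, occurs automatically when $\mu_+\le0$ at the $+\infty$ end and when $\mu_-\ge0$ at the $-\infty$ end. This disposes of both ends in cases (i), (ii), (iii), as well as of the end $z_-$ in case (iv) and the end $z_+$ in case (v).

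What remains, and is the heart of the matter, is to exclude a genuinely two-dimensional space of admissible solutions at $z_+$ when $\mu_+>0$ (case (iv)) and at $z_-$ when $\mu_-<0$ (case (v)). In the first situation every solution of~\eqref{SLTE} decays at $+\infty$, so any nonzero solution decaying at $-\infty$ is already an eigenfunction, and the task is to show this cannot happen under $\mu_-\ge0$, $\nu_-\ge\nu_+$. A direct computation with the roots above shows that the stable subspace at $+\infty$ is two-dimensional exactly when $\mu_+^2(\lambda_\R-\nu_+)+\lambda_\I^2<0$ (in particular $\lambda_\R<\nu_+$), whereas a nontrivial unstable subspace at $-\infty$ forces $\mu_-^2(\lambda_\R-\nu_-)+\lambda_\I^2>0$ (or, when $\mu_-=0$, merely $\lambda_\I\neq0$, since there $\lambda_\R<\nu_+\le\nu_-$); the hypotheses on $\nu_\pm$, together with the size relation between $\mu_\pm$, are designed so that these two inequalities are incompatible, so no such $\lambda$ can be an eigenvalue. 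Case (v) then follows from case (iv) by the substitution $x\mapsto-x$, which interchanges the two ends and carries the asymptotic data of hypothesis (v) onto that of hypothesis (iv).

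I expect the incompatibility argument of the previous paragraph to be the main obstacle: one wants precisely the inequality relating $\lambda_\R$ and $\lambda_\I$ that rules out an eigenfunction in the ``two-dimensional'' region of $\lambda$, expressed through the asymptotic data $\mu_\pm,\nu_\pm$ alone. A natural alternative is a global identity obtained by testing~\eqref{SLTE} against $e^{M(x)}\bar\psi$ with $M(x)=\int_0^x\mu(s)\,\d s$ and separating real and imaginary parts; the difficulty there is that the weighted integrals converge only once one already knows the relevant subspace is one-dimensional, so one must first argue on a half-line with a cut-off and then pass to the limit, and making that limiting argument deliver the sharp constraint --- rather than one stated in terms of $\sup|\mu|$ and $\sup|\nu|$ --- is where the real effort lies.
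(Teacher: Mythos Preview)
Your overall plan coincides with the paper's: Theorem~\ref{thm:main2} is deduced from Theorem~\ref{thm:main} via an auxiliary result (Lemma~\ref{lem:a}) asserting that, under any of (i)--(v), the existence of a nonzero solution of (\ref{SLTE},\ref{BC}) already forces condition~\eqref{eqn:con} at both ends, after which Theorem~\ref{thm:main} applies verbatim. The paper argues exactly through the asymptotic characteristic roots, as you do: a decaying eigenfunction yields a root of $s^2+\mu_+s-(\lambda-\nu_+)=0$ with negative real part and a root of $s^2+\mu_-s-(\lambda-\nu_-)=0$ with positive real part (via Coddington--Levinson rather than Frobenius, but the translation is the one you wrote down); the sum-of-roots observation then gives~\eqref{eqn:con} at the end where $\mu_+\le 0$ or $\mu_-\ge 0$, disposing of (i)--(iii) and one end in each of (iv), (v), exactly as in your outline.

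Two corrections to your write-up. First, your claim that \eqref{eqn:con} literally fails at the real eigenvalues of case~(i) is an artefact of the degeneracy of the left-hand side when $\mu_\pm=0$; what the proof of Theorem~\ref{thm:main} actually uses is the equivalent form~\eqref{eqn:conr}, which for $\mu_\pm=0$ reads $\Re\sqrt{\lambda-\nu_\pm}>0$ and does hold for the eigenvalues that arise, so there is no need to bypass~\eqref{eqn:con}. Second, and more substantively, there is \emph{no} ``size relation between $\mu_\pm$'' in hypothesis~(iv) or~(v): condition~(iv) says only $\mu_+>0$, $\mu_-\ge 0$, $\nu_-\ge\nu_+$, with no comparison of $\mu_+$ to $\mu_-$. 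Consequently the incompatibility you sketch cannot be obtained by comparing the two parabolas $\mu_\pm^2(\lambda_\R-\nu_\pm)+\lambda_\I^2=0$ through their widths --- for $\mu_+$ large relative to $\mu_-$ one can have $\mu_+^2(\lambda_\R-\nu_+)+\lambda_\I^2<0$ and $\mu_-^2(\lambda_\R-\nu_-)+\lambda_\I^2>0$ simultaneously. The paper's Lemma~\ref{lem:a} instead asserts the transfer $(\ref{eqn:con}_-)\Rightarrow(\ref{eqn:con}_+)$ directly from the ordering $\nu_-\ge\nu_+$, without invoking any relation between the $\mu_\pm$; your argument for the ``hard'' end should be rewritten along those lines rather than around a comparison that the hypotheses do not provide. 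The weighted-energy alternative you float is not used and, as you suspect, is not the efficient route.
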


To prove the main theorems,
 we analyze \eqref{CE} using the differential Galois theory.
Similar techniques were used
 to study bifurcations of homoclinic orbits in \cite{BY10} very recently
 and horseshoe dynamics in \cite{MP99,Y03} much earlier. 
Fauvet~\emph{et al.} \cite{FRRT10} also studied an eigenvalue problem
 for a special \emph{non-Fuchsian} second-order differential equation 
 called the prolate spheroidal wave equation \cite{WS05} on a finite interval, 
 using the differential Galois theory.
They analyzed the Stokes phenomenon
 and clarified a relation between solutions of the eigenvalue problem
 and the differential Galois group.

The rest of the paper is organized as follows. 
We provide necessary information on the differential Galois theory in Section~2
 and give proofs of Theorems~\ref{thm:main} and \ref{thm:main2} in Section~3.
In Section~4,
 our result is illustrated with two examples
 for a stationary Schr\"odinger equation
 having a generalization of the Hulth\'en potential
 \cite{H42}
 and an eigenvalue problem for a traveling front solution
 in the Allen-Cahn equation \cite{AC79}.

\section{Differential Galois theory}

We briefly review a part of the differential Galois theory
 which is often referred to as the Picard-Vessiot theory
 and gives a complete framework about the integrability by quadratures
 of linear differential equations with variable coefficients. 

\subsection{Picard-Vessiot extensions and differential Galois groups}

Consider a system of abstract differential equations
\begin{equation}\label{LinearSystem}
\partial y = Ay,\quad\quad A\in\mathrm{gl}(n,\Kset),
\end{equation}
where $\partial$ represents a \emph{derivation},
 which is an additive endomorphism satisfying the Leibniz rule;
$\Kset$ is a \emph{differential field},
 \ie, a field endowed with the derivation $\partial$;
 and $\mathrm{gl}(n,\Kset)$ denotes the ring of $n\times n$ matrices
 with entries in $\Kset$.
The set $\mathrm{C}_{\Kset}$ of elements of $\Kset$ for which $\partial$ vanishes
 is a subfield of $\Kset$
and called the \emph{field of constants for $\Kset$}.
In our application in this paper, the differential field $\Kset$ is
 the field of meromorphic functions on a Riemann surface $\Gamma$
 endowed with a meromorphic vector field,
 so that the field of constants becomes that of complex numbers, $\Cset$.
A \emph{differential field extension} $\Lset\supset \Kset$ is a field extension
 such that $\Lset$ is also a differential field
 and the derivations on $\Lset$ and $\Kset$ coincide on $\Kset$.

\begin{definition}
A \emph{Picard-Vessiot extension} for \eqref{LinearSystem}
 is a differential field extension $\Lset\supset \Kset$ satisfying the following: 
\begin{enumerate}
\item[\bf (PV1)]
There is a fundamental matrix $\Phi$
 of \eqref{LinearSystem} with coefficients in $\Lset$.
\item[\bf (PV2)]
The field $\Lset$ is spanned
 by $\Kset$ and entries of the fundamental matrix $\Phi$.
\item[\bf (PV3)]
The field of constants for $\Lset$ coincides with that for $\Kset$.
\end{enumerate}
\end{definition}

The system \eqref{LinearSystem}
 admits a Picard-Vessiot extension which is unique up to isomorphism.
If $\Kset$ is the field of meromorphic functions on a Riemann surface,
 then we have a fundamental matrix in some field of convergent Laurent series,
 and get the Picard-Vessiot extension by adding convergent Laurent series to $\Kset$.

We now fix a Picard-Vessiot extension $\Lset\supset \Kset$
and 
fundamental matrix $\Phi$ with coefficients in $\Lset$
 for \eqref{LinearSystem}.
Let $\sigma$ be a \emph{$\Kset$-automorphism} of $\Lset$,
 \ie, a field automorphism of $\Lset$
 that commutes with the derivation of $\Lset$
 and leaves $\Kset$ pointwise fixed.
Obviously, $\sigma(\Phi)$ is also a fundamental matrix of \eqref{LinearSystem}
 and consequently there is a matrix $m_\sigma$ with constant entries
 such that $\sigma(\Phi)=\Phi m_\sigma$.
This relation gives a faithful representation
 of the group of $\Kset$-automorphisms of $\Lset$
 on the general linear group as
\[
\mathrm{gal}\colon \mathrm{Aut}_\Kset(\Lset)\to \mathrm{GL}(n,\mathrm{C}_\Lset),
\quad \sigma\mapsto m_{\sigma},
\]
where $\mathrm{Aut}_{\Kset}(\Lset)$ is the set of $\Kset$-automorphisms of $\Lset$,
 and $\mathrm{GL}(n,\mathrm{C}_\Lset)$ is the group
 of $n\times n$ invertible matrices with entries in ${\rm C}_{\Lset}$.
The image of the representation ``$\mathrm{gal}$''
 is a linear algebraic subgroup of ${\rm GL}(n,{\rm C}_{\Lset})$,
 which is called the \emph{differential Galois group} of \eqref{LinearSystem}
and denoted by ${\rm Gal}(\Lset/\Kset)$.
This representation is not unique
 and depends on the choice of the fundamental matrix $\Phi$,
 but a different fundamental matrix only gives rise to a conjugated representation.
Thus, the differential Galois group is unique up to conjugation
 as an algebraic subgroup of the general linear group.

\begin{definition}
A differential field extension $\Lset\supset\Kset$ is called 
\begin{enumerate}
\item[(i)]
an \emph{integral extension} if there exists $a\in\Lset$
 such that $\dot a \in \Kset$ and $\Lset = \Kset(a)$,
 where $\Kset(a)$ is the smallest extension of $\Kset$ containing $a$;
\item[(ii)]
an \emph{exponential extension} if there exists $a\in\Lset$
 such that $\dot{a}/a \in \Kset$ and $\Lset = \Kset(a)$;
\item[(iii)]
an \emph{algebraic extension} if there exists $a\in\Lset$
 such that it is algebraic over $\Kset$ and $\Lset = \Kset(a)$.
\end{enumerate}
\end{definition}

\begin{definition}
A differential field extension $\Lset\supset\Kset$ is called a 
\emph{Liouvillian extension} if it can be decomposed as a tower of extensions,
$$
\Lset = \Kset_n \supset \ldots \supset \Kset_1\supset 
\Kset_0 = \Kset,
$$
such that each extension $\Kset_{i+1}\supset \Kset_i$ is
either integral, exponential or algebraic. It is called \emph{strictly Liouvillian}
if in the tower only integral and exponential extensions appear. 
\end{definition}

In general,
 an algebraic group $G\subset\mathrm{GL}(n,\mathrm{C}_\Lset)$
 contains a unique maximal connected algebraic subgroup $G^0$
 called the \emph{connected component of the identity}
 or \emph{connected identity component}.
The connected identity component $G^0\subset G$
 is a normal algebraic subgroup
 and the smallest subgroup of finite index,
 \ie, the quotient group $G/G^0$ is finite.
By the Lie-Kolchin Theorem \cite{K57,VS03},
 a connected solvable linear algebraic group is triangularizable.
Here a subgroup of ${\rm GL}(n,{\rm C}_{\Lset})$
 is said to be \emph{triangularizable}
 if it is conjugated to a subgroup of the group of upper triangular matrices.
The following theorem relates
 the solvability and triangularizability of the differential Galois group
 with the (strictly) Liouvillian Picard-Vessiot extension
 (see \cite{K57,VS03} and \cite{BM10} for the proofs of the first and second parts,
 respectively).

\begin{theorem}
\label{thm:dg}
Let $\Lset\supset\Kset$ be a Picard-Vessiot extension of \eqref{LinearSystem}.
\begin{enumerate}
\item[(i)]
The connected identity component
 of the differential Galois group ${\rm Gal}(\Kset/\Lset)$ is solvable
 if and only if $\Lset\supset\Kset$ is a Liouvillian extension.
\item[(ii)]
If the differential Galois group ${\rm Gal}(\Kset/\Lset)$ is triangularizable,
 then $\Lset\supset\Kset$ is a strictly Liouvillian extension. 
\end{enumerate}
\end{theorem}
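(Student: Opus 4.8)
The plan is to prove the two assertions separately, obtaining (i) as the classical Kolchin characterization and (ii) by sharpening the ``only if'' half of (i) so that the single algebraic step it produces can be dispensed with. Throughout I write $G=\mathrm{Gal}(\Lset/\Kset)$ and take as available the fundamental theorem of the Picard--Vessiot theory \cite{K57,VS03}: the inclusion-reversing bijection $H\mapsto\Lset^{H}$, $\mathcal F\mapsto\mathrm{Gal}(\Lset/\mathcal F)$ between closed subgroups $H\subseteq G$ and intermediate differential fields $\Kset\subseteq\mathcal F\subseteq\Lset$, the identity $\Lset^{G}=\Kset$ (this is where (PV3) enters), and the fact that a normal closed subgroup corresponds to an intermediate field that is itself a Picard--Vessiot extension of $\Kset$. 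Two elementary computations are also needed: the differential Galois group of an integral extension is a closed subgroup of the additive group $\mathbb{G}_{a}=(\Cset,+)$, that of an exponential extension is a closed subgroup of the multiplicative group $\mathbb{G}_{m}=(\Cset^{\times},\cdot)$, and that of an algebraic extension is finite; in particular all subgroups arising from integral or exponential steps are abelian.

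For the ``if'' direction of (i), suppose $\Lset\supset\Kset$ is Liouvillian. I would first invoke the standard rearrangement lemma: there is a finite algebraic extension $\tilde\Kset\supseteq\Kset$ inside $\Lset$ such that, with $\tilde\Lset:=\Lset\tilde\Kset$, the extension $\tilde\Lset\supset\tilde\Kset$ is \emph{strictly} Liouvillian, while $\mathrm{Gal}(\tilde\Lset/\tilde\Kset)$ has the same connected identity component as $G$ (restriction induces isomorphisms on identity components, since only finite kernels and cokernels are introduced). Now refine the strictly Liouvillian tower over $\tilde\Kset$ into integral and exponential steps only; each such step $\mathcal F_i\subset\mathcal F_{i+1}$ is automatically normal, because if $a'\in\mathcal F_i$ then $\sigma(a)-a$ is a constant so $\sigma(a)\in\mathcal F_i(a)$, and similarly $\sigma(a)/a$ is a constant in the exponential case. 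Hence the Galois correspondence yields a chain $\mathrm{Gal}(\tilde\Lset/\tilde\Kset)=H_0\triangleright H_1\triangleright\cdots\triangleright H_m=\{e\}$ with every factor $H_i/H_{i+1}$ a subgroup of $\mathbb{G}_{a}$ or $\mathbb{G}_{m}$, hence abelian; so $\mathrm{Gal}(\tilde\Lset/\tilde\Kset)$ is solvable, and therefore so is its identity component, which is $G^{0}$.

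For the ``only if'' direction of (i), suppose $G^{0}$ is solvable. Put $E=\Lset^{G^{0}}$; since $G^{0}\triangleleft G$ has finite index, $E\supset\Kset$ is a finite algebraic extension, so $E=\Kset(a)$ for a primitive element $a$ — one algebraic step. Next $\mathrm{Gal}(\Lset/E)=G^{0}$ is connected and solvable, so by the Lie--Kolchin theorem cited above it is triangularizable; fix a basis of the solution space of \eqref{LinearSystem} putting $G^{0}$ into upper triangular form, with associated full flag $V_1\subset\cdots\subset V_n$ of $G^{0}$-invariant solution subspaces and a flag-adapted basis $y_1,\dots,y_n$. Since $V_1=\langle y_1\rangle$ is one-dimensional and $G^{0}$-stable, $G^{0}$ acts on it through a character, so $y_1'/y_1$ is $G^{0}$-invariant and lies in $\Lset^{G^{0}}=E$, whence $E(y_1)\supset E$ is exponential. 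Reduction of order with the known solution $y_1$ produces a linear system of order $n-1$ with coefficients in $E(y_1)$ whose Galois group is a subquotient of $G^{0}$, again connected solvable and triangular for the induced flag, and it recovers $y_2$ by adjoining one exponential (the logarithmic derivative of the new direction) followed by one integral. Iterating $n$ times exhibits $\Lset=E(y_1,\dots,y_n)$ as a strictly Liouvillian tower over $E$, and together with the algebraic step $\Kset\subset E$ this makes $\Lset\supset\Kset$ Liouvillian. For (ii) the argument is identical with one simplification: if $G$ \emph{itself} is triangularizable, there is a full $G$-invariant flag of the solution space, so $V_1=\langle y_1\rangle$ is $G$-stable and $y_1'/y_1\in\Lset^{G}=\Kset$ \emph{directly}; the algebraic step $\Kset\subset E$ is never needed, and the same reduction-of-order induction carried out over $\Kset$ produces $\Lset$ from $\Kset$ by exponential and integral extensions only, \ie, a strictly Liouvillian extension.

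The step I expect to be the main obstacle is the interplay with the finite quotient $G/G^{0}$ in part (i). On the ``if'' side this is precisely the rearrangement lemma: without passing to a finite extension of $\Kset$ one obtains only a subnormal series of $G$ whose factors may include non-solvable finite groups (Galois groups of the algebraic steps), and a finite non-solvable subquotient of the connected group $G^{0}$ is not a priori impossible, so it is the reduction to the strictly Liouvillian case that actually forces solvability of $G^{0}$. On the ``only if'' side the delicate points are checking that reduction of order genuinely keeps every coefficient inside the current differential field without introducing algebraic elements, and that the Galois group of the reduced system remains connected and triangular so that the induction closes. Part (ii) then costs nothing beyond these, its only essential input being $\Lset^{G}=\Kset$, \ie, (PV3).
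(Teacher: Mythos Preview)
The paper does not actually prove this theorem: immediately after stating it, the authors write ``see \cite{K57,VS03} and \cite{BM10} for the proofs of the first and second parts, respectively'' and move on. So there is no in-paper argument to compare against; your proposal supplies what the paper deliberately outsources to the literature.

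That said, your outline is the standard one and is essentially what one finds in the cited references. The ``only if'' half of (i) and all of (ii) via Lie--Kolchin plus iterated reduction of order is exactly the textbook route (Kaplansky, van der Put--Singer), and your observation that triangularizability of the full group $G$ rather than just $G^{0}$ eliminates the preliminary algebraic step $\Kset\subset E=\Lset^{G^{0}}$ is precisely the point of part~(ii). One small caution on the ``if'' half of (i): the ``rearrangement lemma'' you invoke --- pushing all algebraic steps to the bottom so that $\Lset/\tilde\Kset$ becomes strictly Liouvillian over a finite extension $\tilde\Kset$ of $\Kset$ --- is correct but is itself a theorem (one usually proves it by induction on the tower, base-changing each integral or exponential step along the accumulated algebraic extension and checking the step type is preserved). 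You flag this as the main obstacle, and rightly so; everything else in your argument is routine once that lemma is in hand.
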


\subsection{Monodromy groups and Fuchsian equations}

Let $\Kset$ be the field of meromorphic functions on a Riemann surface $\Gamma$
 and let $z_0\in\Gamma$ be a nonsingular point in \eqref{LinearSystem}.
We prolong the fundamental matrix $\Phi(z)$ analytically
 along any loop $\ell$ based at $z_0$ and containing no singular point,
 and obtain another fundamental matrix $\ell*\Phi(z)$.
So there exists a constant nonsingular matrix $M_\ell$ such that
$$
\ell*\Phi(z) = \Phi(z)M_\ell.
$$
We call $M_\ell$ the \emph{monodromy matrix} for $\ell$.
The set of singularities in \eqref{LinearSystem}, which is denoted by $S$,
 is a discrete subset of $\Gamma$.
Let $\pi_1(\Gamma\setminus S,z_0)$
 be the fundamental group of homotopy classes of loops based at $z_0$.
The monodromy matrix $M_\ell$ depends
 on the homotopy class $[\ell]$ of the loop $\ell$,
 and it is also denoted by $M_{[\ell]}$.
We have a representation
$$
\mon\colon \pi_1(\Gamma\setminus S,z_0)\to {\rm GL}(n,\Cset), 
\quad [\ell]\mapsto M_{[\ell]}.
$$
The image of $\mon$ is called the \emph{monodromy group}
 of \eqref{LinearSystem}.
As in the differential Galois group,
 the representation $\mon$ depends on the choice of the fundamental matrix,
 but the monodromy group is defined as a group of matrices up to conjugation.
In general, monodromy transformations
 define automorphisms of the corresponding Picard-Vessiot extension.

Recall that equation~\eqref{LinearSystem} is said to be \emph{Fuchsian}
 if all singularities are regular.
For Fuchsian equations we have the following result
 (see, \eg, Theorem 5.8 in \cite{VS03} for the proof).
\begin{theorem}[Schlessinger]\label{th:sl}
Assume that equation~\eqref{LinearSystem} is Fuchsian.
Then the differential Galois group of \eqref{LinearSystem}
 is the Zariski closure of the monodromy group. 
\end{theorem}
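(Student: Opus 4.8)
The plan is to prove the two inclusions between $\mathrm{Gal}(\Lset/\Kset)$ and the Zariski closure of the monodromy group, the only genuinely Fuchsian input entering at the last step. For the easy inclusion, one first realizes the Picard--Vessiot extension concretely: since the constant field $\Cset$ is algebraically closed, $\Lset$ may be taken to be $\Kset(\Phi_{ij})$ with $\Phi$ an honest analytic fundamental matrix of \eqref{LinearSystem} on the universal covering of $\Gamma\setminus S$, so that every element of $\Lset$ is the germ at $z_0$ of a multivalued meromorphic function that continues analytically along any path in $\Gamma\setminus S$. Continuation along a loop $\ell$ based at $z_0$ is then a field automorphism of $\Lset$ commuting with the derivation, fixing $\Kset$ pointwise (meromorphic functions on $\Gamma$ are single valued) and carrying $\Phi$ to $\Phi M_{[\ell]}$; hence it is a $\Kset$-automorphism $\sigma_\ell$ with $\mathrm{gal}(\sigma_\ell)=M_{[\ell]}$, so the monodromy group is contained in $\mathrm{Gal}(\Lset/\Kset)$. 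As a differential Galois group is a linear algebraic, hence Zariski closed, subgroup of $\mathrm{GL}(n,\Cset)$, the Zariski closure $H$ of the monodromy group also lies in $\mathrm{Gal}(\Lset/\Kset)$.

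For the reverse inclusion I would use the Galois correspondence of the Picard--Vessiot theory \cite{K57,VS03}: it is enough to show that the fixed field $\Lset^{H}$ equals $\Kset$, since then $H=\mathrm{Gal}(\Lset/\Lset^{H})=\mathrm{Gal}(\Lset/\Kset)$. For each fixed $x\in\Lset$ the set of automorphisms $\sigma$ with $\sigma(x)=x$ is Zariski closed, so an element of $\Lset$ fixed by the monodromy group is fixed by $H$; thus $\Lset^{H}$ is precisely the set of $x\in\Lset$ invariant under all monodromy transformations, that is, single-valued meromorphic functions on $\Gamma\setminus S$. It remains to show that such an $x$ extends to a meromorphic function on all of $\Gamma$ and hence belongs to $\Kset$.

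This is the step that uses regularity of the singularities. Because each $s\in S$ is a regular singular point, $\Phi$ can be written near $s$ in the form $\Phi(z)=\widehat\Phi(z)\,(z-s)^{J}$, with $\widehat\Phi$ meromorphic at $s$ and $J$ a constant matrix, $(z-s)^{J}=\exp\bigl(J\log(z-s)\bigr)$; consequently every entry of $\Phi$, and therefore every element of $\Lset=\Kset(\Phi_{ij})$, has \emph{moderate growth} at $s$, i.e. is $O(|z-s|^{-N})$ on sectors up to powers of $\log(z-s)$. A single-valued function of moderate growth on a punctured neighbourhood of $s$ has at worst a pole there --- apply Riemann's removable-singularity theorem to $(z-s)^{N}x$, single-valuedness also excluding any logarithmic term --- so $x$ extends meromorphically across $s$. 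Doing this at every point of $S$ gives $x\in\Kset$, which completes the proof.

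The decisive step, and the only place the Fuchsian hypothesis is truly needed, is this last one: at an irregular singular point the entries of $\Phi$ acquire essential singularities of the type $\exp\bigl(1/(z-s)^{k}\bigr)$, monodromy-invariant elements of $\Lset$ need not be meromorphic on $\Gamma$, and $\mathrm{Gal}(\Lset/\Kset)$ is then in general strictly larger than the Zariski closure of the monodromy group, the discrepancy being governed by the Stokes data. Making the argument fully rigorous also requires the local structure theory of regular singular systems and a little care in realizing the abstract field $\Lset$ by multivalued functions, but no new idea is needed beyond those indicated above.
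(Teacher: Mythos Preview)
The paper does not actually prove this theorem; it merely states it and refers the reader to Theorem~5.8 of \cite{VS03} for the proof. Your sketch is a correct outline of precisely that standard argument: the monodromy group sits inside the differential Galois group because analytic continuation along loops furnishes $\Kset$-automorphisms of $\Lset$, and the reverse inclusion follows from the Galois correspondence once one checks that monodromy-invariant elements of $\Lset$ are meromorphic on all of $\Gamma$, the moderate-growth property at regular singular points being exactly what makes this work. So there is nothing to compare---your proposal is the proof the paper is citing, and your identification of the Fuchsian hypothesis as entering only in the final extension-across-singularities step is accurate.
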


Since the group of triangular matrices is algebraic,
 the Zariski closure of a triangularizable group is triangularizable.
Noting this fact,
 we obtain the following result immediately from Theorem~\ref{th:sl}.

\begin{corollary}\label{TriangularG}
Assume that equation~\eqref{LinearSystem} is Fuchsian.
Then the monodromy group is triangularizable
 if and only if the differential Galois group is triangularizable. 
\end{corollary}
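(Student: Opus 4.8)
The plan is to read this off from Schlessinger's theorem (Theorem~\ref{th:sl}) together with the elementary fact already noted in the text: the set $T\subset\mathrm{GL}(n,\Cset)$ of invertible upper triangular matrices is a Zariski-closed subgroup, and conjugation by a fixed matrix is an automorphism of $\mathrm{GL}(n,\Cset)$ as an algebraic variety, hence preserves Zariski closures. Fix once and for all a fundamental matrix $\Phi$, so that the monodromy group $M$ and the differential Galois group $G$ of \eqref{LinearSystem} are realized as concrete subgroups of the same $\mathrm{GL}(n,\Cset)$; since \eqref{LinearSystem} is Fuchsian, Theorem~\ref{th:sl} gives $G=\overline{M}$, the Zariski closure of $M$.

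For the ``only if'' direction, assume $M$ is triangularizable: there is $P\in\mathrm{GL}(n,\Cset)$ with $PMP^{-1}\subseteq T$. Taking Zariski closures and using that $x\mapsto PxP^{-1}$ is a homeomorphism for the Zariski topology, $P\,\overline{M}\,P^{-1}=\overline{PMP^{-1}}\subseteq\overline{T}=T$, so $G=\overline{M}$ is triangularizable.

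For the converse, assume $G$ is triangularizable, say $PGP^{-1}\subseteq T$. Since $M\subseteq\overline{M}=G$, we get $PMP^{-1}\subseteq PGP^{-1}\subseteq T$ at once, so $M$ is triangularizable.

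I do not anticipate a genuine obstacle: all of the substance is already packaged in Theorem~\ref{th:sl}, which is where the Fuchsian hypothesis is consumed, and what remains are the two bookkeeping facts that ``triangularizable'' is a conjugation-invariant, Zariski-closed property and that a linear group always sits inside its own Zariski closure. The only point needing a little care is to fix a single fundamental matrix at the outset, so that $G$ and $M$ are compared as honest subgroups rather than merely as conjugacy classes; with that done, Theorem~\ref{th:sl} applies directly.
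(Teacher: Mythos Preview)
Your argument is correct and is exactly the approach the paper takes: it invokes Theorem~\ref{th:sl} together with the observation that the upper triangular matrices form a Zariski-closed subgroup, so triangularizability passes to the Zariski closure (and trivially restricts to subgroups). You have simply spelled out in detail what the paper compresses into the remark preceding the corollary.
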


\section{Proofs of Theorems~\ref{thm:main} and \ref{thm:main2}}

We first consider general second-order differential equations of the form 
\begin{equation}\label{SecondOrder}
\frac{\d^2 u}{\d z^2}+\frac{f_1(z)}{z}\frac{\d u}{\d z}+\frac{f_2(z)}{z^2}u = 0
\end{equation}
on $\Cset$, where $f_1(z)$ and $f_2(z)$ are holomorphic at $z=0$.
The origin $z=0$ is a regular singularity in \eqref{SecondOrder}.
Let $\rho,\rho'$ be the \emph{local exponents} of \eqref{SecondOrder} at $z=0$,
 \ie, roots of of the \emph{indicial equation}
\begin{equation}
s(s-1)+f_1(0)s+f_2(0)=0.
\label{eqn:ie}
\end{equation}
The following result is classical and well known (see, \eg, \cite{I56,WW27}).
\begin{lemma}
\label{lem:b}
Around $z=0$,
 equation~\eqref{SecondOrder} has two independent solutions of the following forms:
\begin{enumerate}
\item[(i)]
If $\rho-\rho'$ is not an integer, then
\[
u_1(z)=z^\rho v_1(z),\quad
u_2(z)=z^{\rho'}\!v_2(z);
\]
\item[(ii)]
if $\rho-\rho'$ is a nonnegative integer, then 
\[
u_1(z)=z^\rho v_1(z),\quad
u_2(z)=z^{\rho'}\!v_2(z)+u_1(z)\log z.
\]
\end{enumerate}
Here $v_1(z),v_2(z)$ denote some functions which are holomorphic at $z=0$.
\end{lemma}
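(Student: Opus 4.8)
The plan is to use the classical Frobenius method. First I would look for a solution of \eqref{SecondOrder} in the form $u(z)=z^{s}\sum_{k\ge 0}a_{k}z^{k}$ with $a_{0}=1$, writing $f_{1}(z)=\sum_{j\ge 0}f_{1,j}z^{j}$ and $f_{2}(z)=\sum_{j\ge 0}f_{2,j}z^{j}$ for the Taylor expansions, which converge on some disc $|z|<r$. Substituting and collecting powers of $z$, the coefficient of $z^{s-2}$ reproduces the indicial equation \eqref{eqn:ie}, i.e. $P(s):=s(s-1)+f_{1,0}s+f_{2,0}=0$ with roots $\rho,\rho'$, and for $k\ge 1$ one gets the recursion
\[
P(s+k)\,a_{k}=-\sum_{j=1}^{k}\bigl[(s+k-j)f_{1,j}+f_{2,j}\bigr]a_{k-j}.
\]
Taking $s=\rho$ (choosing, when $\rho-\rho'$ is a nonnegative integer, the root with the larger real part, so that $P(\rho+k)\neq 0$ for all $k\ge 1$), the $a_{k}$ are determined uniquely, giving a formal solution $u_{1}(z)=z^{\rho}v_{1}(z)$ with $v_{1}(0)=1$.

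Next I would prove that $v_{1}$ is in fact holomorphic at $z=0$. Since $|P(\rho+k)|\ge c\,k^{2}$ for some $c>0$ and all large $k$, while the $f_{1,j},f_{2,j}$ are dominated by a geometric sequence, a standard Cauchy majorant argument bounds the $a_{k}$ by the coefficients of a convergent series, so $\sum_{k\ge0}a_{k}z^{k}$ converges near $0$ and $v_{1}$ is holomorphic. This yields the first solution in both cases. For case (i), where $\rho-\rho'$ is not an integer, I would simply repeat the construction with $s=\rho'$: then $\rho'-\rho$ is not a nonnegative integer either, $P(\rho'+k)\neq 0$ for all $k\ge1$, the recursion closes, convergence follows as before, and we obtain $u_{2}(z)=z^{\rho'}v_{2}(z)$ with $v_{2}$ holomorphic. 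Independence of $u_{1},u_{2}$ follows because their leading exponents differ; equivalently the Wronskian is a nonzero multiple of $z^{\rho+\rho'-1}$.

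For case (ii), where $\rho-\rho'=m$ is a nonnegative integer, I would get the second solution by reduction of order. The Wronskian of a fundamental system is $W(z)=\exp\!\bigl(-\int f_{1}(z)/z\,\d z\bigr)=z^{-f_{1,0}}w(z)$ with $w$ holomorphic and $w(0)\neq0$, so $u_{2}(z)=u_{1}(z)\int W(z)/u_{1}(z)^{2}\,\d z$. Using $u_{1}(z)^{2}=z^{2\rho}v_{1}(z)^{2}$ together with $\rho+\rho'=1-f_{1,0}$ (from \eqref{eqn:ie}), the integrand equals $z^{-m-1}\phi(z)$ with $\phi$ holomorphic, $\phi(0)\neq0$. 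Integrating term by term, the single resonant term $z^{-1}$ contributes a $\log z$, and everything else integrates to a Laurent polynomial times a holomorphic function; after multiplying by $u_{1}$ and rescaling, this gives $u_{2}(z)=z^{\rho'}v_{2}(z)+u_{1}(z)\log z$ with $v_{2}$ holomorphic (the $\log$ coefficient is a nonzero multiple of $\phi(0)$ when $m=0$, and a specific Taylor coefficient of $\phi$ when $m>0$).

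The main obstacle I expect is the bookkeeping in case (ii): one must verify precisely that the reduction-of-order integral splits into a $\log z$ term plus $z^{\rho'}\times(\text{holomorphic})$ with no leftover singular powers, which is where the identity $\rho+\rho'=1-f_{1,0}$ is used at exactly the right place. By contrast, the convergence step in cases (i) and the first solution is routine once the quadratic lower bound $|P(\rho+k)|\ge c\,k^{2}$ is established, the method of majorants being standard.
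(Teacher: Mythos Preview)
Your argument is correct: this is precisely the classical Frobenius construction (indicial equation, recursion, majorant for convergence, reduction of order for the resonant case), and it is exactly what one finds in the references the paper cites. The paper itself gives no proof of this lemma, simply stating that the result is classical and referring to Ince and Whittaker--Watson, so there is nothing to compare beyond noting that your sketch reproduces the standard textbook proof. One small caveat worth recording: in case~(ii) with $m=\rho-\rho'>0$ the coefficient $\phi_m$ of the $\log z$ term may vanish, in which case the second solution is purely of the form $z^{\rho'}v_2(z)$ and cannot be rescaled to have $\log$-coefficient exactly~$1$; the lemma as stated in the paper glosses over this (harmlessly for the subsequent application), and your parenthetical remark already flags the issue.
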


Using Lemma~\ref{lem:b},
 we obtain the following result (cf. Lemma~4.6 of \cite{BY10}).

\begin{lemma}
\label{lem:c}
Suppose that the indicial equation~\eqref{eqn:ie}
 has roots $\rho_\pm$ such that $\Re(\rho_-)<0<\Re(\rho_+)$.
Then we have the following statements for \eqref{SecondOrder}:
\begin{enumerate}
\item[(i)]
There exists a nonzero solution $\bar{u}(z)$
 which is bounded along any ray approaching $z=0$.
\item[(ii)]
Any other independent solution is unbounded along any ray approaching $z=0$.
\item[(iii)]
Let $\ell$ be a loop around $z=0$ in $\Cset$.
The monodromy matrix $M_\ell$ has an eigenvalue $e^{2\pi i\rho_+}$
 and the bounded solution $\bar{u}(z)$ is the associated eigenvector.
\end{enumerate}
\end{lemma}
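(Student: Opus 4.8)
Everything will follow by reading off the explicit local normal forms of Lemma~\ref{lem:b} and using the elementary fact that $r^{a}(1+|\log r|)\to 0$ as $r\to 0^{+}$ whenever $a>0$. First I would dispose of (i). Since $\Re(\rho_+-\rho_-)>0$, if $\rho_+-\rho_-$ happens to be an integer it must be a positive one, so Lemma~\ref{lem:b} applies with $\rho=\rho_+$ and $\rho'=\rho_-$; in either of its two cases the equation has a solution of the form $u_1(z)=z^{\rho_+}v_1(z)$ with $v_1$ holomorphic at $z=0$. Working on a cut neighborhood of the origin (equivalently, on the universal cover of a punctured neighborhood), the argument of $z$ stays bounded along any ray $z=re^{i\theta}$ approaching $0$, so $|z^{\rho_+}|=r^{\Re(\rho_+)}e^{-\theta\,\Im(\rho_+)}\to 0$ because $\Re(\rho_+)>0$. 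Hence $u_1(z)\to 0$ along every such ray; setting $\bar u=u_1$ proves (i).

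For (ii), I would write an arbitrary solution independent of $\bar u$ as $u=a\bar u+b u_2$ with $b\neq 0$, where $u_2$ is the second solution supplied by Lemma~\ref{lem:b}. Its leading behaviour near $0$ is $b\,z^{\rho_-}v_2(z)$ with $v_2$ holomorphic and (in the standard Frobenius construction) $v_2(0)\neq 0$, while $|z^{\rho_-}|=r^{\Re(\rho_-)}e^{-\theta\,\Im(\rho_-)}\to\infty$ since $\Re(\rho_-)<0$. In the resonant case the extra term obeys $|u_1(z)\log z|\le C\,r^{\Re(\rho_+)}(1+|\log r|)\to 0$, so it does not interfere, and the contribution $a\bar u$ is bounded and hence negligible. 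Therefore $u$ is unbounded along every ray, which is (ii); in particular $\bar u$ is, up to a scalar, the only bounded solution.

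For (iii) I would continue the solutions analytically once counterclockwise around $z=0$ along a loop $\ell$. The single-valued factor $v_1$ returns to itself while $z^{\rho_+}$ acquires the factor $e^{2\pi i\rho_+}$, so $\ell*\bar u=e^{2\pi i\rho_+}\bar u$. Reading the monodromy matrix $M_\ell$ as the matrix of this transformation on the solution space relative to a fundamental matrix whose first column is $\bar u$, this says exactly that $e^{2\pi i\rho_+}$ is an eigenvalue of $M_\ell$ with eigenvector $\bar u$.

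I do not anticipate a genuine obstacle, as the lemma is essentially a corollary of Lemma~\ref{lem:b}. The only points that need a little care are fixing a branch of $z^{\rho}$ so that ``boundedness along a ray approaching $0$'' is well posed --- handled by passing to a punctured-disk cover on which $\arg z$ stays bounded along each ray --- and, in the resonant case, absorbing the cross term $u_1\log z$, which is immediate from $r^{a}|\log r|\to 0$ for $a>0$.
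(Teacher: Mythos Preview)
Your proposal is correct and follows essentially the same route as the paper: parts~(i) and (ii) are read off from the Frobenius forms of Lemma~\ref{lem:b}, and part~(iii) comes from tracking the analytic continuation of $z^{\rho_+}v_1(z)$ around $z=0$. The only cosmetic difference is that the paper, for (iii), writes down the full $2\times 2$ monodromy matrix in the basis $\{u_1,u_2\}$ in both the non-resonant and resonant cases and then reads off the eigenpair, whereas you compute only the action on $\bar u$; conversely, you spell out the ray estimates for (i)--(ii) that the paper leaves implicit.
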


\begin{proof}
Parts~(i) and (ii) immediately follows from Lemma~\ref{lem:b}.
It remains to prove part~(iii).

Using Lemma~\ref{lem:b},
 we compute the monodromy matrix $M_\ell$ for the loop $\ell$ as
\[
M_\ell=
\begin{pmatrix}
e^{2\pi\rho_+} & 0\\
0 & e^{2\pi\rho_-}
\end{pmatrix}
\quad\mbox{and}\quad
\begin{pmatrix}
e^{2\pi\rho_+} & 0\\
2\pi i & e^{2\pi\rho_-}
\end{pmatrix}
\]
for $\rho_+-\rho_-\not\in\Zset$ and for $\rho_+-\rho_-\in\Zset$, respectively,
 in the basis $\{u_1(z),u_2(z)\}$.
Hence, $e^{2\pi\rho_+}$ is an eigenvalue of $M_\ell$
 and $u_1(z)$ is the associated eigenvector for both cases.
Noting that the bounded solution $\bar{u}(z)$ corresponds to $u_1(z)$,
 we prove part~(iii).
\end{proof}

Now we are in a position to prove Theorem~\ref{thm:main}.

\begin{proof}[Proof of Theorem~\ref{thm:main}]
Suppose that condition~\eqref{eqn:con} holds.
Then we have
\begin{equation}
\Re\left(\sqrt{\mu_\pm^2+4(\lambda-\nu_\pm)}\right)>|\mu_\pm|.
\label{eqn:conr}
\end{equation}
Here we took a branch of the square root function $\sqrt{z}$
 which is positive when $z$ is real and positive.
Noting that $f(z_\pm)=0$ and $f'(z_\pm)\neq 0$ by assumption~(A1),
 we write the indicial equations of \eqref{CE} at $z=z_\pm$ as
\begin{equation}
s(s-1)+(a_\pm\mu_\pm+1)s+a_\pm^2(\nu_\pm-\lambda)
 =s^2+a_\pm\mu_\pm s+a_\pm^2(\nu_\pm-\lambda)=0,
\label{eqn:ie2}
\end{equation}
where $a_\pm=1/f'(z_\pm)\neq 0$.
From \eqref{eqn:conr} we easily see that the indicial equation~\eqref{eqn:ie2}
 has roots with positive and negative real parts.
Hence, it follows from Lemmas~\ref{lem:b} and \ref{lem:c} that
 equation~\eqref{CE} has only one bounded independent solution of the form
\[
\psi_\pm(z)=(z-z_\pm)^{\chi_\pm}\,v_\pm(z)
\]
around each of $z=z_\pm$,
 where $\chi_\pm$ represent roots of \eqref{eqn:ie2} with positive real parts
 and $v_\pm(z)$ are holomorphic at $z=z_\pm$.
Moreover, by Lemma~\ref{lem:c}(iii),
 $\psi_\pm(z)$ are eigenvectors of the monodromy matrices $M_{\ell_\pm}$
 for loops $\ell_\pm$ around $z=z_\pm$ in $\Cset$.

Assume that the boundary value problem (\ref{SLTE},\ref{BC})
 has a nonzero solution $\psi(x)$.
Since the solution of \eqref{SLTE} must be represented
 as $\psi(x)=\psi_\pm(\gamma(x))$, we have $\psi_+(z)=\psi_-(z)$.
Hence, the monodromy matrices $M_{\ell_\pm}$ have a common eigenvector,
 so that the monodromy group for \eqref{CE} is triangularizable.
Appealing to Corollary~\ref{TriangularG}, we complete the proof.
\end{proof}

We turn to the proof of Theorem~\ref{thm:main2}.
Letting $\psi_1=\psi$ and $\psi_2=\d\psi/d x$,
 we rewrite \eqref{SLTE} as
\begin{equation}\label{ML}
\frac{\d}{\d x}
\begin{pmatrix}
\psi_1 \\
 \psi_2
\end{pmatrix}
=\begin{pmatrix}
0 & 1 \\ 
\lambda-\nu(x) & -\mu(x)
\end{pmatrix}\begin{pmatrix}
\psi_1 \\
\psi_2
\end{pmatrix}.
\end{equation}
The coefficient matrix of \eqref{ML} exponentially converges to
\[
A_\pm(\lambda)=
\begin{pmatrix}
0 & 1 \\ 
\lambda-\nu_\pm & -\mu_\pm
\end{pmatrix}
\]
as $x\to\pm\infty$.

\begin{lemma}
\label{lem:a}
Suppose that one of conditions \emph{(i)-(v)} in Theorem~\emph{\ref{thm:main2}} holds
 and the boundary value problem \emph{(\ref{SLTE},\ref{BC})} has a nonzero solution.
Then condition~\eqref{eqn:con} holds.
\end{lemma}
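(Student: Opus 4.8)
The plan is to translate the boundary conditions \eqref{BC} on a nonzero solution $\psi$ of \eqref{SLTE} into sign constraints on the eigenvalues of the limiting matrices $A_\pm(\lambda)$ --- equivalently on the local exponents \eqref{eqn:ie2} of \eqref{CE} at $z=z_\pm$ --- and then to check that each of (i)--(v) forces the required splitting. First I would record the asymptotic dictionary. Since $z=z_\pm$ are regular singular points of \eqref{CE} whose local exponents are $a_\pm=1/f'(z_\pm)$ times the eigenvalues of $A_\pm(\lambda)$, Lemma~\ref{lem:b} describes the solution near $z_\pm$; and because $\psi$ decays, i.e.\ $\psi(z)\to0$ as $z\to z_\pm$ along the path $\gamma$, every local exponent actually occurring there has positive real part, with at least one occurring since $\psi\not\equiv0$. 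As $a_+<0<a_-$, this says precisely that $A_+(\lambda)$ has an eigenvalue with negative real part and that $A_-(\lambda)$ has an eigenvalue with positive real part. Writing the eigenvalues of $A_\pm(\lambda)$ as the roots $r_1^\pm,r_2^\pm$ of $r^2+\mu_\pm r+(\nu_\pm-\lambda)=0$, so that $r_1^\pm+r_2^\pm=-\mu_\pm$ and $r_1^\pm r_2^\pm=\nu_\pm-\lambda$, I would then reduce the lemma --- through the computation relating \eqref{eqn:con} to \eqref{eqn:conr} in the proof of Theorem~\ref{thm:main} --- to showing that $A_+(\lambda)$ \emph{and} $A_-(\lambda)$ each have eigenvalues with real parts of both signs, i.e.\ that the exponents \eqref{eqn:ie2} at $z_+$ and at $z_-$ each straddle the imaginary axis.

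Next I would dispose of each end according to the sign of $\mu_\pm$. If $\mu_+\le0$, then $\Re r_1^++\Re r_2^+=-\mu_+\ge0$; taking $\Re r_1^+<0$ (which we have), the other satisfies $\Re r_2^+=-\mu_+-\Re r_1^+>0$, so $A_+(\lambda)$ straddles the imaginary axis. By the symmetric argument, $\mu_-\ge0$ gives that $A_-(\lambda)$ straddles it. This settles the $+$ end whenever $\mu_+\le0$ (cases (i), (iii), (v)) and the $-$ end whenever $\mu_-\ge0$ (cases (i), (ii), (iv)); in particular cases (i), (ii) and (iii) are already complete.

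The remaining and, I expect, hardest cases are the $+$ end of (iv), where $\mu_+>0$, and its mirror image the $-$ end of (v), where $\mu_-<0$; there the trace identity gives nothing and the orderings $\nu_-\ge\nu_+$ (resp.\ $\nu_+\ge\nu_-$) must be brought in. For case (iv) the $-$ end is already settled, so $A_-(\lambda)$ straddles the imaginary axis. I would then use that the eigenvalue $\lambda$ is real --- the problem (\ref{SLTE},\ref{BC}) being self-adjoint on $L^2$ with the admissible weight $e^{\int\mu\,\d x}$ under the present sign restrictions on $\mu_\pm$ --- so that $A_-(\lambda)$ has real eigenvalues of opposite sign, whence $r_1^-r_2^-=\nu_--\lambda<0$, that is $\lambda>\nu_-\ge\nu_+$, and therefore $r_1^+r_2^+=\nu_+-\lambda<0$, forcing $A_+(\lambda)$ to straddle the imaginary axis as well. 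Case (v) then follows from case (iv) by the change of variable $x\mapsto-x$, which interchanges $z_+$ and $z_-$ and sends (v) to (iv). The crux of the argument is this last propagation step --- arranging that the eigenvalue lies on the correct side of $\nu_+$ (resp.\ $\nu_-$) --- and the delicate point is the justification that $\lambda$ is real (or, failing that, a direct complex-analytic estimate comparing $\Re\sqrt{\mu_+^2+4(\lambda-\nu_+)}$ with $\Re\sqrt{\mu_-^2+4(\lambda-\nu_-)}$ that uses only $\nu_+\le\nu_-$).
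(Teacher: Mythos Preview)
Your setup and your handling of cases (i)--(iii) coincide with the paper's: both deduce from the boundary condition that $A_+(\lambda)$ has an eigenvalue with negative real part and $A_-(\lambda)$ one with positive real part, and then use the trace relation $r_1^\pm+r_2^\pm=-\mu_\pm$ to force straddling whenever $\mu_+\le0$ (respectively $\mu_-\ge0$). The only cosmetic difference is that the paper reads off the asymptotics from the Coddington--Levinson theory for \eqref{ML} rather than from the local exponents of \eqref{CE} at $z_\pm$.

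For cases (iv) and (v) your route diverges from the paper's, and the divergence is where the gap lies. The paper never reduces to real $\lambda$; it argues directly on the inequality~\eqref{eqn:con}, claiming that once it holds at the ``$-$'' end, the ordering $\nu_-\ge\nu_+$ forces it at the ``$+$'' end (and symmetrically for (v)) --- this is precisely the ``direct complex-analytic estimate'' you list only as a fallback. Your primary route through weighted self-adjointness is circular: in case (iv) the weight $w=e^{\int\mu}$ grows like $e^{\mu_+ x}$ at $+\infty$ with $\mu_+>0$, and in exactly the scenario you must rule out --- both eigenvalues of $A_+(\lambda)$ with negative real part --- the bounded solution generically decays at the slower exponential rate, with real exponent larger than $-\mu_+/2$, so $w|\psi|^2\to\infty$ and $\psi\notin L^2(w\,\d x)$. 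Thus the ``present sign restrictions on $\mu_\pm$'' do not make the weight admissible; the integration-by-parts identity behind ``eigenvalues are real'' needs, for its boundary terms to vanish, the very straddling of $A_+(\lambda)$ you are trying to prove. You should therefore drop the self-adjoint detour and pursue the direct comparison of the two instances of \eqref{eqn:con} under $\nu_-\ge\nu_+$ (respectively $\nu_+\ge\nu_-$), as the paper does.
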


\begin{proof}
Let $\kappa_\pm$ be eigenvalues of the matrices $A_\pm(\lambda)$.
By a classical result on linear differential equations
 (see Section~8 and also Problems~29 and 35 in Chapter~3 of \cite{CL55}),
 we see that equation~\eqref{ML} has a nonzero solution $(\psi_1(x),\psi_2(x))$
 such that
\[
\lim_{x\to\pm\infty}\psi_j(x)e^{-\kappa_\pm x}=c_j,\quad
j=1,2
\]
for any constants $c_j$, $j=1,2$.

Assume that equation \eqref{ML} has a nonzero bounded solution.
Then for some eigenvalue $\kappa_\pm$,
 $e^{\kappa_\pm x}$ must tend to zero as $x\to\pm\infty$
 so that $\Re(\kappa+)<0$ and $\Re(\kappa_-)>0$.
This means that a root of the quadratic equation
\begin{equation}
s^2+\mu_\pm s-(\lambda-\nu_\pm)=0
\label{eqn:2eq}
\end{equation}
has negative and positive real parts for the signs $+$ and $-$, respectively.
Hence, conditions~(\ref{eqn:con}$_+$) and (\ref{eqn:con}$_-$)
 hold if $\mu_+\le 0$ and $\mu_-\ge 0$, respectively.
Here we have said that
 condition~(\ref{eqn:con}$_+$) and (\ref{eqn:con}$_-$) hold
 if condition~\eqref{eqn:con} holds for the signs ``$+$'' and ``$-$'', respectively. 
Moreover, if $\nu_+\ge\nu_-$ and $\nu_-\ge\nu_+$,
 then conditions~(\ref{eqn:con}$_+$) and (\ref{eqn:con}$_-$)
 means (\ref{eqn:con}$_-$) and (\ref{eqn:con}$_+$), respectively.
Thus, if one of (i)-(v) in Theorem~\ref{thm:main2} holds,
 then condition~\eqref{eqn:con} holds for both signs $\pm$.
\end{proof}

From the proof of Lemma~\ref{lem:a}
 we also see that eigenfunctions of the problem~(\ref{SLTE},\ref{BC})
 decay exponentially at infinity.

\begin{proof}[Proof of Theorem~\ref{thm:main2}]
Using Lemma~\ref{lem:a}, we obtain Theorem~\ref{thm:main2}
 as a corollary of \ref{thm:main}.
\end{proof}

\begin{remark}\label{rmk:a}

Suppose that equation~\eqref{CE} is Fuchsian on
 the Riemann sphere $\Pset^1$
 and has only three singularities at $z=z_{\pm}$ and $z_*$, where $z_*\in\Pset^1$.
Then the surface $\Gamma_\loc\setminus \{z_{\pm}\}$ is homotopic
 to $\Pset^1\setminus\{z_{\pm},z_*\}$,
so that they give rise to equivalent monodromy representations.
Hence, we see via Theorems~\ref{thm:main} and \ref{th:sl} that
equation~\eqref{CE} is integrable by Liouvillian functions on $\Cset(z)$,
which lie in a Liouvillian extension of $\Cset(z)$,
if the boundary value problem~(\ref{SLTE},\ref{BC}) has a nonzero solution.
This situation happens in examples of the next section.
\end{remark}

\begin{remark}\label{rmk:b}
As in the proof of Lemma~ \ref{lem:a}
 we show that if all eigenvalues of $A_+(\lambda)$ have negative real parts
 and an eigenvalue of $A_-(\lambda)$ has a positive real part,
 or if all eigenvalues of $A_-(\lambda)$ have positive real parts
 and an eigenvalue of $A_+(\lambda)$ has a negative real part,
 then the boundary value problem (\ref{SLTE},\ref{BC}) has a nonzero solution.
On the other hand, if all eigenvalues of $A_+(\lambda)$ have positive real parts
 or all eigenvalues of $A_-(\lambda)$ have negative real parts,
 then it has no nonzero solution.
Hence, $\lambda=\lambda_\R+i\lambda_\I$ is an eigenvalue
 of the problem (\ref{SLTE},\ref{BC}) if
\begin{enumerate}
\item[(i)]
$\mu_+>0$, $\mu_-\ge 0$, $\nu_-<\nu_+$ and
 condition~(\ref{eqn:con}$_-$) holds but
\begin{equation}
16\mu_+^2(\lambda_\R-\nu_+)+\lambda_I^2\le 0;
\label{eqn:con+}
\end{equation}
\item[(ii)]
$\mu_+\le 0$, $\mu_-<0$, $\nu_+<\nu_-$ and
 condition~(\ref{eqn:con}$_+$) holds but
\begin{equation}
16\mu_-^2(\lambda_\R-\nu_-)+\lambda_I^2\le 0;
\label{eqn:con-}
\end{equation}
\item[(iii)]
$\mu_-<0<\mu_+$ and
 both conditions~\eqref{eqn:con+} and \eqref{eqn:con-} hold.
\end{enumerate}
Note that these eigenvalues are continuous spectra
 for the eigenvalue problem (\ref{SLTE},\ref{BC}).
Moreover, $\lambda$ is not an eigenvalue of the problem (\ref{SLTE},\ref{BC}) if
\begin{enumerate}
\item[(i)]
$\mu_+\le 0$ and condition~\eqref{eqn:con+} holds;
\item[(ii)]
$\mu_-\ge 0$ and condition~\eqref{eqn:con-} holds.
\end{enumerate}
Thus, we can determine all eigenvalues using Theorem~\ref{thm:main}
 even if either of conditions~(i)-(v) in Theorem~\ref{thm:main2} do not hold.
\end{remark}

\section{Examples}

\begin{figure}[t]
\centering\includegraphics[scale=0.65]{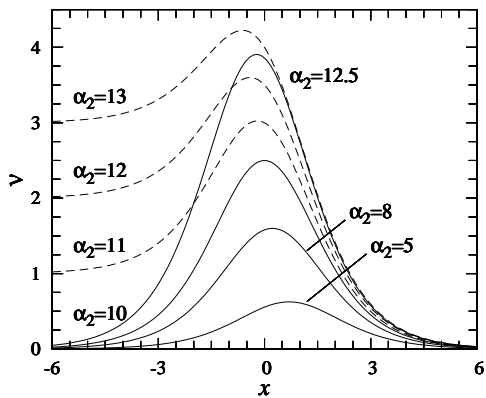}
\caption{Shape of the function $\nu(x)$ in \eqref{eqn:ex1}
 for several values of $\alpha_2$
 when $\alpha_1=10/\alpha_2$ or $1$ and $\alpha_3=10$.
Solid and dashed lines represent the cases of $\alpha_1=10/\alpha_2$ and $1$,
 respectively.
\label{fig:ex1V}}
\end{figure}

To illustrate the above theory, we give two examples
 with a Schr\"odinger equation having a generalized Hulth\'en potential
 and an eigenvalue problem for a traveling front in the Allen-Cahn equation.

\subsection{Schr\"odinger equation with a generalized Hulth\'en potential}
We first consider a case in which 
\begin{equation}
\mu(x)=0,\quad
\nu(x)=\frac{\alpha_2}{e^x+\alpha_1}-\frac{\alpha_3}{(e^x+\alpha_1)^2},
\label{eqn:ex1}
\end{equation}
where $\alpha_j$, $j=1,2,3$, are constants with $\alpha_j>0$, $j=1,2,3$.
For \eqref{eqn:ex1} equation~\eqref{SLTE} corresponds to a Schr\"odinger equation
 with the generalized Hulth\'en potential,
 which is a special case of \cite{Z81}.
A similar but more specific potential was also treated in \cite{K04,S02}.
We take $f(z)=z(1-z)$ so that equation~\eqref{eqn:1d} has two equilibria at $z=0,1$
 and a heteroclinic orbit
\[
\gamma(x)=\frac{e^x}{e^x+1}
\]
from $z=0$ to $z=1$.
We easily see that assumptions~(A1) and (A2) hold with $z_-=0$ and $z_+=1$,
\[
g(z)=0,\quad
h(z)=\frac{\alpha_2(z-1)}{(\alpha_1-1)z-\alpha_1}
 -\frac{\alpha_3(z-1)^2}{((\alpha_1-1)z-\alpha_1)^2}
\]
and condition~(i) in Theorem~\ref{thm:main2}, \ie, $\mu_\pm=0$, holds.
We also have
\[
\nu_-=\frac{\alpha_2}{\alpha_1}-\frac{\alpha_3}{\alpha_1^2},\quad
\nu_+=0,\quad
\sup_{x\in\Rset}\nu(x)=\frac{\alpha_2^2}{4\alpha_3}.
\]
See Figure~\ref{fig:ex1V} for the shape of the function $\nu(x)$
 with several values of $\alpha_2$ when $\alpha_1=10/\alpha_2$ or $1$ and $\alpha_3=10$.

Equation~\eqref{CE} becomes
\begin{equation}
\label{CEex1}
\psi''+\frac{2z-1}{z(z-1)}\psi'+\frac{h(z)-\lambda}{z^2(z-1)^2}\psi=0,
\end{equation}
which has only regular singularities at $z=0,1,z_0$, where
\[
z_0=
\begin{cases}
\alpha_1/(\alpha_1-1) & \mbox{for $\alpha_1\neq 1$};\\
\infty & \mbox{for $\alpha_1=1$}.
\end{cases}
\]
Solutions of \eqref{CEex1} are expressed by a Riemann $P$ function \cite{I56,WW27} as
\begin{equation}
P\left\{
\begin{matrix}
0 & 1 & z_0\\
\rho_1^+ & \rho_2^+ & \rho_3^+ & z\\
\rho_1^- & \rho_2^- & \rho_3^-
\end{matrix}
\right\},
\label{eqn:P}
\end{equation}
where $\rho_1^\pm$, $\rho_2^\pm$ and $\rho_3^\pm$
 represent the local exponents of \eqref{CEex1}
 at $z=0$, 1 and $z_0$, respectively, and are given by
\[
\rho_1^\pm=\pm\sqrt{\lambda-\nu_-},\quad
\rho_2^\pm=\pm\sqrt{\lambda},\quad
\rho_3^\pm=\half\left(1\pm\frac{1}{\alpha_1}\sqrt{\alpha_1^2+4\alpha_3}\right).
\]

The following result was essentially proved in \cite{K69}.

\begin{proposition}
\label{prop:kimura}
Consider a general Fuchsian second-order differential equation
 having three singularities $z=z_j$, $j=1,2,3$, and a Riemann $P$ function
\[
P\left\{
\begin{matrix}
z_1 & z_2 & z_3\\
\rho_1^+ & \rho_2^+ & \rho_3^+ & z\\
\rho_1^- & \rho_2^- & \rho_3^-
\end{matrix}
\right\}.
\]
Its monodromy and differential Galois groups are triangularizable
 if and only if at least one of $\rho_1+\rho_2+\rho_3$, $-\rho_1+\rho_2+\rho_3$,
 $\rho_1-\rho_2+\rho_3$ and $\rho_1+\rho_2-\rho_3$ is an odd integer,
 where $\rho_j=\rho_j^+ -\rho_j^-$, $j=1,2,3$, denote the exponent differences.
\end{proposition}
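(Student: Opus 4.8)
The plan is to reduce the statement to the classical reducibility criterion for the monodromy of Gauss's hypergeometric equation. First I would record the elementary fact that, for a rank-two system, the monodromy group is triangularizable if and only if the local monodromies share a common eigenvector, that is, stabilize a line in $\Cset^2$; moreover, since the equation in question is Fuchsian, Corollary~\ref{TriangularG} lets me pass freely between triangularizability of the monodromy group and of the differential Galois group. Next I would normalize the equation: a M\"obius transformation carries $z_1,z_2,z_3$ to $0,1,\infty$, and multiplying $\psi$ by suitable constant powers of $z$ and $z-1$ brings it to the standard hypergeometric form with parameters $a,b,c$ — possible because three regular singular points leave no accessory parameters free. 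A M\"obius change produces an isomorphic monodromy representation, and the shearing multiplies each local monodromy matrix by a nonzero scalar, so neither operation affects whether the monodromy group is triangularizable, and neither changes the exponent differences $\rho_j$ up to sign. In the hypergeometric normalization one has $\rho_1=1-c$, $\rho_2=c-a-b$, $\rho_3=a-b$ up to sign, and using the Fuchs relation $\sum_j(\rho_j^{+}+\rho_j^{-})=1$ one checks that ``one of $\pm\rho_1\pm\rho_2\pm\rho_3$ is an odd integer'' is equivalent to ``one of $a,b,c-a,c-b$ is an integer''; it therefore suffices to show that the hypergeometric monodromy is reducible precisely in the latter case.

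For the forward implication, suppose the local monodromies $M_1,M_2,M_3$ around $z_1,z_2,z_3$, chosen so that $M_1M_2M_3=I$, have a common eigenvector $v$, and let $e^{2\pi i\rho_j^{\epsilon_j}}$ be the eigenvalue of $M_j$ on $v$ for a suitable sign $\epsilon_j\in\{+,-\}$. Then $M_1M_2M_3v=v$ forces $e^{2\pi i(\rho_1^{\epsilon_1}+\rho_2^{\epsilon_2}+\rho_3^{\epsilon_3})}=1$, hence $\rho_1^{\epsilon_1}+\rho_2^{\epsilon_2}+\rho_3^{\epsilon_3}\in\Zset$. Writing $\rho_j^{\pm}=\tfrac12\bigl((\rho_j^{+}+\rho_j^{-})\pm\rho_j\bigr)$ and using $\sum_j(\rho_j^{+}+\rho_j^{-})=1$, this sum equals $\tfrac12\bigl(1+\epsilon_1\rho_1+\epsilon_2\rho_2+\epsilon_3\rho_3\bigr)$, which is an integer exactly when $\epsilon_1\rho_1+\epsilon_2\rho_2+\epsilon_3\rho_3$ is an odd integer; since oddness is unchanged by an overall sign, the eight sign patterns collapse to the four quantities in the statement, one of which is thus an odd integer.

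For the converse, assume one of the four combinations is an odd integer, so after normalization one of $a,b,c-a,c-b$ is an integer. Now I must exhibit an invariant line. I would produce a quasi-single-valued solution of the form $z^{\rho_1^{\epsilon_1}}(z-1)^{\rho_2^{\epsilon_2}}Q(z)$ with $Q$ a polynomial (or at worst a rational function): its span is automatically stabilized by every $M_j$, which scales it by $e^{2\pi i\rho_j^{\epsilon_j}}$. Concretely, after shearing so that the exponents at $0$ and at $1$ become $0$ together with the corresponding difference, the integrality condition becomes ``an exponent at $\infty$ is an integer'', the Frobenius recursion for a Laurent solution at $\infty$ then truncates, and one obtains a polynomial or rational solution; equivalently, when the relevant parameter is a non-positive integer the series ${}_2F_1$ terminates, and all remaining cases reduce to that one through the hypergeometric symmetries $a\leftrightarrow b$, $z\mapsto 1-z$ and the Euler transformation $u\mapsto(1-z)^{c-a-b}u$, each of which permutes $\{a,b,c-a,c-b\}$. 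This is the classical reducibility criterion, carried out in \cite{K69}; I would either reproduce the short truncation argument or simply cite it. Combining the two implications with Corollary~\ref{TriangularG} then yields the proposition.

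The main obstacle is the converse direction. The forward direction is essentially formal — it follows from the relation $M_1M_2M_3=I$ together with the Fuchs relation — whereas building the invariant line when the integrality condition holds genuinely requires work, especially in the case of a positive integer parameter, for which the naive hypergeometric series does not terminate and one must either route through the transformation symmetries to a terminating case or track the Frobenius recursion precisely enough to see where it breaks off. A secondary but unavoidable nuisance is the sign bookkeeping for the $\rho_j$ and the factor $\tfrac12$ in the Fuchs relation, which is exactly what promotes ``parameter an integer'' to ``a combination of exponent differences an \emph{odd} integer''.
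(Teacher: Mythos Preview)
The paper does not give its own proof of Proposition~\ref{prop:kimura}; it simply states the result with the remark that it ``was essentially proved in \cite{K69}''. Your proposal is a correct and rather complete reconstruction of the standard argument behind Kimura's theorem: the M\"obius-plus-shearing reduction to Gauss's hypergeometric equation, the forward implication via $M_1M_2M_3=I$ together with the Fuchs relation, and the converse via a terminating/rational solution when one of $a,b,c-a,c-b$ is an integer. This is exactly the route taken in \cite{K69}, so your sketch is consistent with what the paper defers to; the only thing to add is that the paper itself treats the proposition as a black box and you are going well beyond what it does.
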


From Proposition~\ref{prop:kimura} we see that
 the monodromy and differential Galois groups for \eqref{CEex1} are triangularizable
 if and only if
\begin{equation}
\pm 2\sqrt{\lambda-\nu_-}\pm 2\sqrt{\lambda}\pm\bar{\rho}_3=2k+1
\label{eqn:CONex1a}
\end{equation}
for some combination of the signs, \ie,
\begin{equation}
\lambda=\frac{((2k+1\pm\bar{\rho}_3)^2+4\nu_-)^2}{16(2k+1\pm\bar{\rho}_3)^2}\in\Rset,
\label{eqn:CONex1b}
\end{equation}
where $k$ is some integer and $\bar{\rho}_3=\sqrt{\alpha_1^2+4\alpha_3}/\alpha_1$.

\begin{proposition}
\label{prop:a}
Real eigenvalues of the problem \emph{(\ref{SLTE},\ref{BC})}
 satisfy $\lambda\le\sup_{x\in\Rset}\nu(x)$.
\end{proposition}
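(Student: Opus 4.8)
The plan is to argue by contradiction with an elementary maximum‑principle argument, using that $\mu(x)\equiv 0$ in this example, so that \eqref{SLTE} reduces to
\[
\psi''=(\lambda-\nu(x))\psi .
\]
Suppose $\lambda$ is a real eigenvalue and let $\psi$ be an associated eigenfunction. Since $\nu$ and $\lambda$ are real, both $\Re\psi$ and $\Im\psi$ solve the same equation and satisfy the boundary conditions \eqref{BC}, and at least one of them is not identically zero; so I may assume from the outset that $\psi$ is real‑valued and nonzero.

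First I would produce an interior extremum. Because $\psi$ is nonzero it takes, say, a positive value at some point (the case of a negative value is symmetric), while $\psi(x)\to 0$ as $x\to\pm\infty$ by \eqref{BC}; hence $\sup_{x\in\Rset}\psi(x)>0$ and this supremum is attained at some finite point $x_0$, where therefore $\psi(x_0)>0$, $\psi'(x_0)=0$ and $\psi''(x_0)\le 0$. Next I would substitute into the equation: $\psi''(x_0)=(\lambda-\nu(x_0))\psi(x_0)$, so $(\lambda-\nu(x_0))\psi(x_0)\le 0$, and dividing by $\psi(x_0)>0$ gives
\[
\lambda\le\nu(x_0)\le\sup_{x\in\Rset}\nu(x).
\]
The negative case is identical: there $\psi(x_0)<0$ and $\psi''(x_0)\ge 0$, whence $(\lambda-\nu(x_0))\psi(x_0)\ge 0$ and again $\lambda\le\nu(x_0)\le\sup_{x\in\Rset}\nu(x)$. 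This proves the proposition.

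I do not expect a serious obstacle here; the only points needing a word of care are the reduction to a real eigenfunction and the fact that the extremum is attained at a finite point, and both follow at once from the boundary conditions \eqref{BC} (indeed from the exponential decay of eigenfunctions noted after the proof of Lemma~\ref{lem:a}). As an alternative route, which also covers complex eigenvalues, one can multiply \eqref{SLTE} (with $\mu\equiv 0$) by $\bar\psi$, integrate over $\Rset$, and integrate by parts; the boundary terms vanish by the exponential decay of $\psi$ and $\psi'$, yielding
\[
\lambda\int_{\Rset}|\psi|^2\,\d x=-\int_{\Rset}|\psi'|^2\,\d x+\int_{\Rset}\nu(x)\,|\psi|^2\,\d x\le\sup_{x\in\Rset}\nu(x)\int_{\Rset}|\psi|^2\,\d x,
\]
so that $\Re\lambda\le\sup_{x\in\Rset}\nu(x)$ for every eigenvalue, which contains Proposition~\ref{prop:a} as the special case $\lambda\in\Rset$.
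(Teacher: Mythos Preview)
Your argument is correct; it is a different (and cleaner) route than the paper's. The paper argues by contradiction in the first–order system~\eqref{ML}: assuming $\lambda>\sup_{x}\nu(x)$, it fixes a point with $\psi_2(x_0)=\psi'(x_0)>0$, and then uses that $\psi_2'=(\lambda-\nu)\psi_1-\mu\psi_2$ is positive whenever $\psi_1>0$ and $\psi_2$ is small to trap $(\psi_1,\psi_2)$ in a half–plane for all $x>x_0$ (resp.\ $x<x_0$), so that $\psi_1$ cannot decay. Your proof instead locates an interior extremum of a real eigenfunction and reads off $\lambda\le\nu(x_0)$ from the sign of $\psi''(x_0)$. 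One remark: although you invoke $\mu\equiv 0$, your extremum argument in fact needs no such restriction, since $\psi'(x_0)=0$ kills the first–order term and one still has $\psi''(x_0)=(\lambda-\nu(x_0))\psi(x_0)$; this matters because the paper reuses Proposition~\ref{prop:a} in \S4.2 where $\mu\not\equiv 0$. Your integral alternative is also fine here and even yields $\Re\lambda\le\sup\nu$; it extends to constant $\mu$ (as in \S4.2) because $\Re\!\int\mu\bar\psi\psi'\,\d x=\tfrac{\mu}{2}\int(|\psi|^2)'\,\d x=0$, but for nonconstant $\mu$ it would only give a bound involving $\mu'$, whereas the extremum argument and the paper's phase–plane argument both work for arbitrary $\mu$.
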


\begin{proof}
Suppose that $\lambda>\sup_{x\in\Rset}\nu(x)$.
We can assume without loss of generality
 that a nontrivial solution of \eqref{ML} satisfies $\psi_2(x_0)>0$
 for some $x_0\in\Rset$
 since we can take $(-\psi_1(x_0),-\psi_2(x_0))$ if not.
If $\psi_1(x_0)>0$, then $\psi_1(x)$ does not converge to zero as $x\to+\infty$
 since $\psi_2^\prime(x)>0$ for $x>x_0$ when $\psi_2(x)$ is sufficiently small.
Similarly, if $\psi_1(x_0)<0$,
 then $\psi_1(x)$ does not converge to zero as $x\to-\infty$
 since $\psi_2^\prime(x)>0$ for $x<x_0$ when $\psi_2(x)$ is sufficiently small.
Thus, we obtain the result.
\end{proof}

Using Theorem~\ref{thm:main2}, Lemma~\ref{lem:a} and Proposition~\ref{prop:a},
 we prove the following.

\begin{theorem}\label{thm:ex1a}
If the boundary value problem \emph{(\ref{SLTE},\ref{BC})} with \eqref{eqn:ex1}
 has a nonzero solution, then condition~\eqref{eqn:CONex1b} holds
 and $\max(\nu_-,0)<\lambda<\alpha_2^2/4\alpha_3$.
\end{theorem}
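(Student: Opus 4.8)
The plan is to obtain the conclusion by assembling three ingredients that are already available: the Galoisian obstruction of Theorem~\ref{thm:main2}, Kimura's classification of triangularizable Riemann $P$ equations (Proposition~\ref{prop:kimura}), and the elementary \emph{a priori} bounds of Lemma~\ref{lem:a} and Proposition~\ref{prop:a}. Suppose the boundary value problem \eqref{SLTE}, \eqref{BC} with \eqref{eqn:ex1} has a nonzero solution. The singularities of \eqref{CEex1} are $z=0,1,z_0$, and since $\alpha_j>0$ one has $z_0\notin[0,1]$, so $\Gamma_\loc$ may be chosen with $z=0,1$ as its only singularities; moreover $\mu_\pm=0$, so hypothesis~(i) of Theorem~\ref{thm:main2} holds and the restriction of \eqref{CEex1} to $\Gamma_\loc$ has a triangularizable differential Galois group. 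As in Remark~\ref{rmk:a}, $\Gamma_\loc\setminus\{0,1\}$ and $\Pset^1\setminus\{0,1,z_0\}$ have the same fundamental group --- free on the two loops around $0$ and $1$ --- so their monodromy representations are equivalent, and since both equations are Fuchsian, Corollary~\ref{TriangularG} transports triangularizability to the global equation \eqref{CEex1} on $\Cset(z)$: its monodromy and differential Galois groups are triangularizable.

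Next I invoke Proposition~\ref{prop:kimura} with the exponent differences $\rho_1=2\sqrt{\lambda-\nu_-}$, $\rho_2=2\sqrt{\lambda}$ and $\rho_3=\bar\rho_3=\sqrt{\alpha_1^2+4\alpha_3}/\alpha_1$. Triangularizability is then equivalent to one of the four quantities $\pm2\sqrt{\lambda-\nu_-}\pm2\sqrt{\lambda}\pm\bar\rho_3$ being an odd integer, \ie\ to \eqref{eqn:CONex1a}. Such a relation is solved by elementary manipulation: setting $a=2\sqrt{\lambda-\nu_-}$, $b=2\sqrt{\lambda}$ one has $a^2-b^2=-4\nu_-$, so from $a+b=c:=2k+1\mp\bar\rho_3$ one gets $a-b=-4\nu_-/c$, hence $b=(c^2+4\nu_-)/(2c)$ and $\lambda=b^2/4=(c^2+4\nu_-)^2/(16c^2)$, which is \eqref{eqn:CONex1b}; the other sign choices give the same formula after relabelling $k$. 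In particular $\lambda$ is forced to be \emph{real} (this is not assumed a priori, since the problem \eqref{SLTE}, \eqref{BC} is posed over $\Cset$) and to satisfy \eqref{eqn:CONex1b}.

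To locate the interval, I use $\lambda\in\Rset$ together with $\mu_\pm=0$. Rerunning the argument of Lemma~\ref{lem:a}: a nonzero bounded --- indeed exponentially decaying --- solution forces the quadratic $s^2=\lambda-\nu_\pm$ to have a root with strictly negative real part for the sign $+$ and one with strictly positive real part for the sign $-$, which for real $\lambda$ means $\lambda>\nu_+=0$ and $\lambda>\nu_-$, \ie\ $\lambda>\max(\nu_-,0)$. For the upper bound, Proposition~\ref{prop:a} gives $\lambda\le\sup_x\nu(x)\le\alpha_2^2/4\alpha_3$; I upgrade this to a strict inequality by noting that the convexity argument of Proposition~\ref{prop:a} still applies when $\lambda=\sup_x\nu(x)$, since then $\psi''=(\lambda-\nu)\psi\ge0$ wherever $\psi>0$, with equality $\lambda=\nu(x)$ only at the unique maximum of $\nu$, forcing a nonnegative solution to be non-decreasing past its maximum and hence not to decay, and symmetrically for sign-changing solutions. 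Thus $\lambda\ne\alpha_2^2/4\alpha_3$, and altogether $\max(\nu_-,0)<\lambda<\alpha_2^2/4\alpha_3$.

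The step I expect to require the most care is the transfer in the first paragraph: Theorem~\ref{thm:main2} is a statement about \eqref{CEex1} restricted to $\Gamma_\loc$, whereas Kimura's criterion is formulated on the closed sphere, and the identification of the two monodromy groups genuinely uses that $z_0$ is the only singularity lying outside $\Gamma_\loc$. The remaining points are routine, the one small wrinkle being the boundary-case refinement of Proposition~\ref{prop:a}; this is geometrically expected, since the endpoints of the $\lambda$-interval correspond to the continuous spectrum (compare Remark~\ref{rmk:b}) and only its interior can carry discrete eigenvalues.
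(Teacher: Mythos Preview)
Your proof is correct and follows essentially the same route as the paper, which proves Theorem~\ref{thm:ex1a} in a single sentence invoking Theorem~\ref{thm:main2}, Lemma~\ref{lem:a} and Proposition~\ref{prop:a} together with the Kimura criterion discussed just before the statement. You simply spell out the steps the paper leaves implicit: the monodromy transfer from $\Gamma_\loc$ to $\Pset^1$ (which the paper relegates to Remark~\ref{rmk:a}), the algebra from \eqref{eqn:CONex1a} to \eqref{eqn:CONex1b}, and the sharpening of Proposition~\ref{prop:a} to a strict upper bound, the last of which the paper does not actually justify.
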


Based on Theorem~\ref{thm:ex1a},
 we compute eigenvalues and eigenfunctions.
For $\alpha_1\neq 1$, we first transform \eqref{CEex1} by
\[
\zeta=\frac{(1-z_0)z}{z-z_0}\quad
\left(z=\frac{z_0\zeta}{\zeta+z_0-1}\right)
\]
to have regular singularities at $\zeta=0,1,\infty$.
We take $\zeta=z$ for $\alpha_1=1$.

Suppose that
\[
2\sqrt{\lambda-\nu_-}+2\sqrt{\lambda}=2k+1+\bar{\rho}_3>0,\quad
k\in\Zset.
\]
Then equation~\eqref{eqn:CONex1b} holds with the positive sign.
We set
\[
\eta(\zeta)=\zeta^{-\rho_1^+}(\zeta-1)^{-\rho_2^+}\psi(\zeta),
\]
so that the Riemann $P$ function \eqref{eqn:P} becomes
\[
\zeta^{-\rho_1^+}(\zeta-1)^{-\rho_2^+}
P\left\{
\begin{matrix}
0 & 1 & \infty\\
\rho_1^+ & \rho_2^+ & \rho_3^+ & \zeta\\
\rho_1^- & \rho_2^- & \rho_3^-
\end{matrix}
\right\}
=
P\left\{
\begin{matrix}
0 & 1 & \infty\\
0 & 0 & k+1+\bar{\rho}_3 & \zeta\\
2\rho_1^- & 2\rho_2^- & k+1
\end{matrix}
\right\}.
\]
Hence, we obtain the hypergeometric equation
\begin{equation}
\zeta(1-\zeta)\frac{\d^2\eta}{\d\zeta^2}
 +(c-(a+b+1)z)\frac{\d\eta}{\d\zeta}-ab\,\eta=0,
\label{eqn:hyperg}
\end{equation}
where $a=k+1+\bar{\rho}_3$, $b=k+1$ and $c=1-2\rho_1^-=1+2\sqrt{\lambda-\nu_-}$.
Thus, if $k$ is a negative integer,
 then there exists a bounded solution in \eqref{CEex1} as
\begin{equation}
\psi(\zeta)=\zeta^{\sqrt{\lambda-\nu_-}}(1-\zeta)^{\sqrt{\lambda}}
 F(k+1+\bar{\rho}_3,k+1,1+2\sqrt{\lambda-\nu_-};\zeta),
\label{eqn:solex1}
\end{equation}
where $F(a,b,c;\zeta)$ is the hypergeometric function
\[
F(a,b,c;\zeta)=\sum_{j=0}^{\infty}\frac{a(a+1)\cdots(a+j-1)b(b+1)\cdots(b+j-1)}
 {j!\,c(c+1)\cdots(c+j-1)}\zeta^j,
\] 
which becomes a finite series when $a$ or $b$ is a nonpositive integer.
For the other cases of \eqref{eqn:CONex1a},
 similar computations show that there is no bounded solution in \eqref{CEex1}.
Thus, we have the following result.

\begin{theorem}\label{thm:ex1b}
If for some integer $k\in(-\half(\bar{\rho}_3+1),0)$
\[
\lambda=\frac{((2k+1+\bar{\rho}_3)^2+4\nu_-)^2}{16(2k+1+\bar{\rho}_3)^2}
 \in\left(\max(\nu_-,0),\frac{\alpha_2^2}{4\alpha_3}\right),
\]
then the boundary value problem \emph{(\ref{SLTE},\ref{BC})} with \eqref{eqn:ex1}
 has a nonzero solution given by \eqref{eqn:solex1}
 with $\zeta=(1-z_0)\gamma(x)/(\gamma(x)-z_0)$ for $\alpha_1\neq 1$
 and $\zeta=\gamma(x)$ for $\alpha_1=1$.
\end{theorem}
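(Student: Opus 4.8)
The plan is to reduce the statement to the explicit construction carried out in the paragraphs preceding it, which shows that \emph{if} $\lambda$ satisfies $2\sqrt{\lambda-\nu_-}+2\sqrt{\lambda}=2k+1+\bar{\rho}_3$ for a negative integer $k$, then \eqref{eqn:solex1} is a bounded solution of \eqref{CEex1}. Accordingly I would (1) verify that the value of $\lambda$ prescribed by the hypothesis does satisfy that relation, (2) invoke the construction to produce \eqref{eqn:solex1}, and (3) check that, after pulling back along $\gamma$, it meets the boundary conditions \eqref{BC}. Step~(1) is where I expect the real work to lie.

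\emph{Step 1.} Fix the integer $k\in(-\half(\bar{\rho}_3+1),0)$ and set $m=2k+1+\bar{\rho}_3$; the two inequalities on $k$ say exactly that $k\le -1$ and $m>0$, and they also give $m\le\bar{\rho}_3-1$. By hypothesis $\lambda\in\Rset$ with $\lambda>\max(\nu_-,0)$, so $\lambda$ and $\lambda-\nu_-$ are both positive real and the chosen branch of the square root yields $\sqrt{\lambda}>0$, $\sqrt{\lambda-\nu_-}>0$. The prescribed value gives $\lambda=(m^2+4\nu_-)^2/16m^2$, hence $\lambda-\nu_-=(m^2-4\nu_-)^2/16m^2$, so that $\sqrt{\lambda}=|m^2+4\nu_-|/4m$ and $\sqrt{\lambda-\nu_-}=|m^2-4\nu_-|/4m$. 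To conclude $2\sqrt{\lambda-\nu_-}+2\sqrt{\lambda}=m$ — that is, \eqref{eqn:CONex1a} with the first sign combination, which is the one the construction uses because it normalizes the exponents at $z=0,1$ in the \emph{bounded} directions $\rho_1^+,\rho_2^+$ — one has to exclude the spurious alternative $m^2<4|\nu_-|$, in which the same identities yield instead $|2\sqrt{\lambda}-2\sqrt{\lambda-\nu_-}|=m$, a different sign pattern of \eqref{eqn:CONex1a}. I would rule this out from the bounds $\max(\nu_-,0)<\lambda<\alpha_2^2/4\alpha_3=\sup_{x\in\Rset}\nu(x)$ (the latter coming from Proposition~\ref{prop:a}) together with $m\le\bar{\rho}_3-1$, showing that these force $m^2\ge 4|\nu_-|$. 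This is the delicate point; the remaining steps are essentially bookkeeping.

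\emph{Step 2.} Granting the relation of step~(1), the construction applies verbatim: the substitution $\eta(\zeta)=\zeta^{-\rho_1^+}(\zeta-1)^{-\rho_2^+}\psi(\zeta)$ turns \eqref{CEex1} — after the M\"obius change of variable that moves the singularities to $0,1,\infty$, or after no change when $\alpha_1=1$ — into the hypergeometric equation \eqref{eqn:hyperg} with $a=k+1+\bar{\rho}_3$, $b=k+1$ and $c=1+2\sqrt{\lambda-\nu_-}$. Since $b=k+1\le 0$ is a nonpositive integer and $c>1$ is not a nonpositive integer, $F(k+1+\bar{\rho}_3,k+1,1+2\sqrt{\lambda-\nu_-};\zeta)$ is a nonzero polynomial solving \eqref{eqn:hyperg}, and undoing the substitution exhibits \eqref{eqn:solex1} as a solution of \eqref{CEex1} which is holomorphic on a neighborhood of the segment joining $z=0$ and $z=1$.

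\emph{Step 3.} As $x$ runs over $\Rset$, $\gamma(x)=e^x/(e^x+1)$ runs over $(0,1)$; since $z_0=\alpha_1/(\alpha_1-1)\notin[0,1]$, the image $\zeta=(1-z_0)\gamma(x)/(\gamma(x)-z_0)$ (resp.\ $\zeta=\gamma(x)$ for $\alpha_1=1$) likewise runs over $(0,1)$, staying away from the third singular point. On $(0,1)$ the polynomial factor of \eqref{eqn:solex1} is bounded, while $\zeta^{\sqrt{\lambda-\nu_-}}\to 0$ as $\zeta\to 0^+$ and $(1-\zeta)^{\sqrt{\lambda}}\to 0$ as $\zeta\to 1^-$ because $\sqrt{\lambda-\nu_-}>0$ and $\sqrt{\lambda}>0$. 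As $\gamma(x)\to 0$ (resp.\ $\to 1$) exponentially when $x\to-\infty$ (resp.\ $x\to+\infty$), so does $\zeta$, and therefore $\psi(\zeta(\gamma(x)))\to 0$ exponentially at $x=\pm\infty$; this is \eqref{BC}. Hence the pulled-back function is a nonzero eigenfunction of \eqref{SLTE} for the prescribed $\lambda$, which completes the argument.
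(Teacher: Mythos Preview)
Your plan follows the paper's own approach: the theorem is established by the construction carried out in the paragraphs immediately preceding it, and the paper simply writes ``Thus, we have the following result'' after exhibiting \eqref{eqn:solex1}. Your Steps~2 and~3 restate that construction more explicitly and carefully (in particular, the verification of \eqref{BC} via the exponential decay of $\gamma$ is spelled out, whereas the paper leaves it implicit in the word ``bounded'').

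Where you go beyond the paper is Step~1. The paper \emph{starts} from the relation $2\sqrt{\lambda-\nu_-}+2\sqrt{\lambda}=2k+1+\bar{\rho}_3$, deduces the formula for $\lambda$, and then states the theorem with the formula as hypothesis; it never checks the converse implication that the formula together with the stated range forces this particular sign pattern rather than the spurious one $|2\sqrt{\lambda}-2\sqrt{\lambda-\nu_-}|=m$. You are right to flag this as a genuine subtlety, and your identification of it is sharper than the paper's treatment. However, you do not actually carry out the exclusion of $m^2<4|\nu_-|$, and the bounds you propose to use do not obviously suffice: for $\nu_->0$ and $m^2$ just below $4\nu_-$ the formula gives $\lambda$ just above $\nu_-$, which can well lie below $\alpha_2^2/4\alpha_3$, so the upper bound on $\lambda$ alone does not rule it out. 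This step therefore remains a gap in your argument---though one the paper itself is simply silent on.
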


\begin{figure}[t]
\centering\includegraphics[scale=0.65]{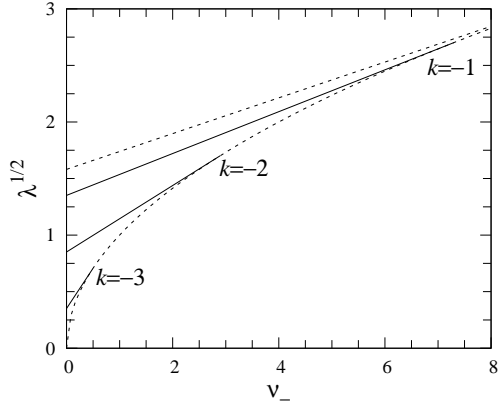}
\caption{Eigenvalues for \eqref{eqn:ex1} with $\alpha_1=1$ and $\alpha_3=10$.
The dotted lines represent the upper bound
 $\sqrt{\lambda}=\half\alpha_2/\sqrt{\alpha_3}
 =\half(\alpha_1\nu_-/\sqrt{\alpha_3}+\sqrt{\alpha_3}/\alpha_1)$
 and the lower bound $\sqrt{\lambda}=\sqrt{\nu_-}$.
\label{fig:ex1ev}}
\end{figure}

\begin{figure}[t]
\includegraphics[scale=0.47]{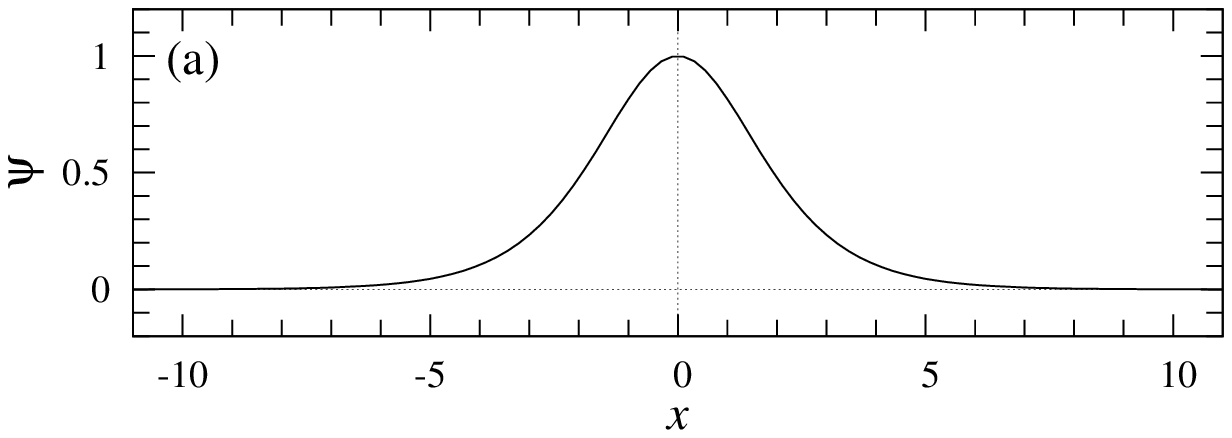}\quad
\includegraphics[scale=0.47]{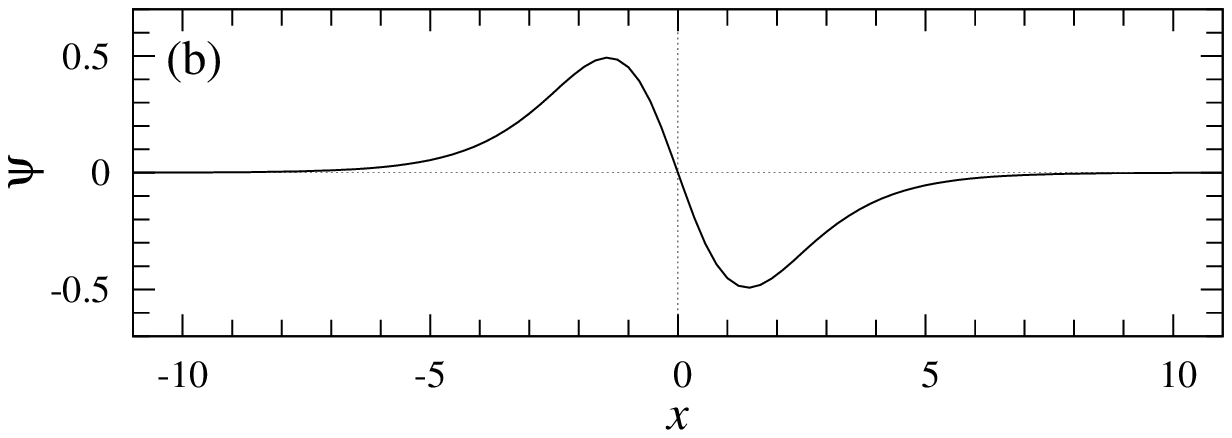}\\[2ex]
\includegraphics[scale=0.47]{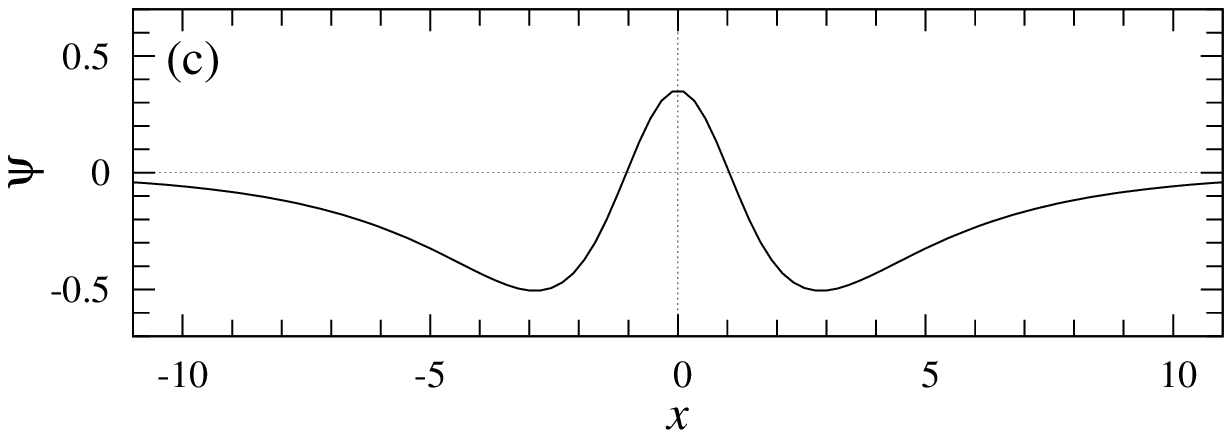}\quad
\includegraphics[scale=0.47]{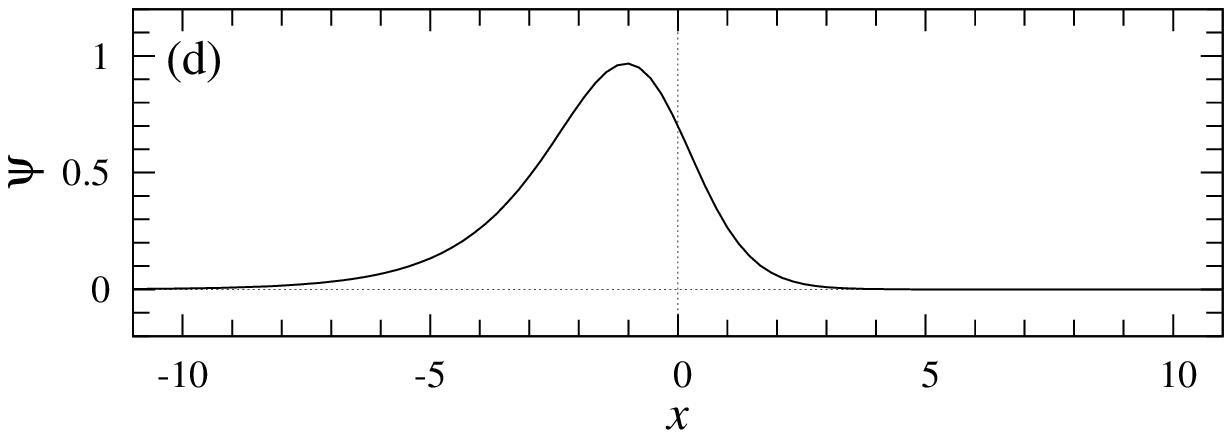}\\[2ex]
\includegraphics[scale=0.47]{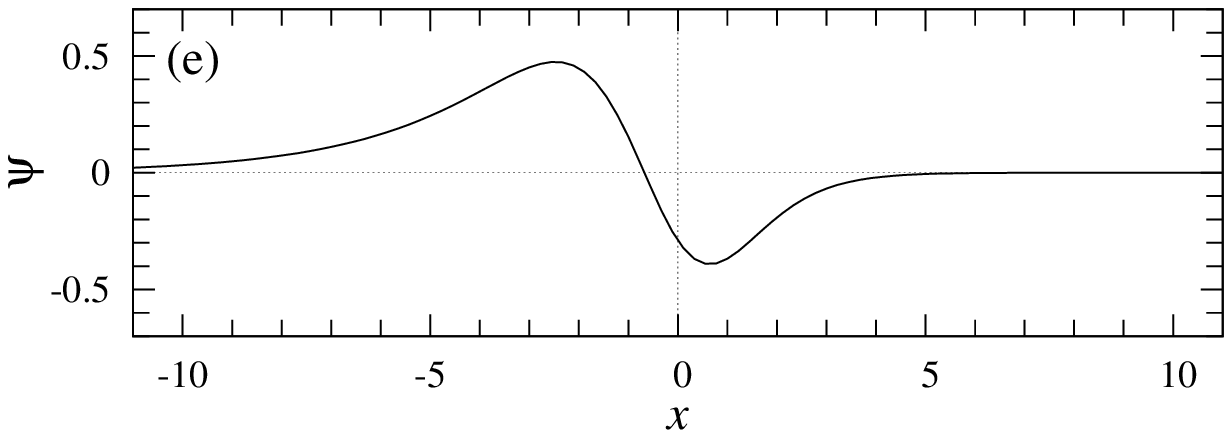}\quad
\includegraphics[scale=0.47]{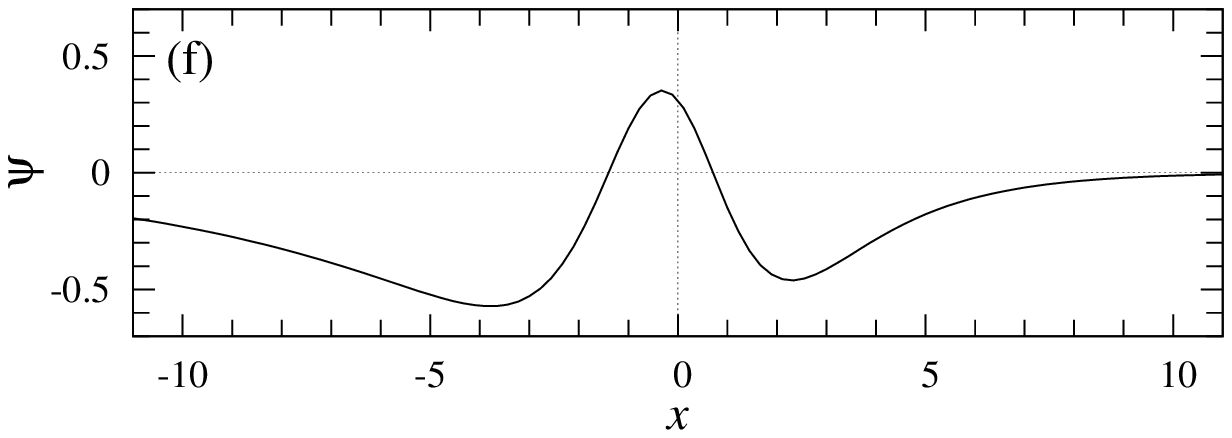}
\caption{Eigenfunctions for \eqref{eqn:ex1} with $\alpha_1=1$ and $\alpha_3=10$:
(a) $(\nu_-,\sqrt{\lambda})=(0,1.35078)$;
(b) $(0,0.850781)$;
(c) $(0,0.350781)$;
(d) $(3.5,1.99855)$;
(e) $(1.5,1.29155)$;
(f) $(0.25,0.528955)$.
\label{fig:ex1}}
\end{figure}

Eigenvalues and eigenfunctions
 for \eqref{eqn:ex1} with $\alpha_1=1$ and $\alpha_3=10$ are plotted
 in Figs.~\ref{fig:ex1ev} and \ref{fig:ex1}, respectively.
Eigenfunctions on the first, second and third branches in Fig.~\ref{fig:ex1ev}
 are given in Figs.~\ref{fig:ex1}(a,d), (b,e) and (c,f), respectively.
Note that the hypothesis of Theorem~\ref{thm:ex1b} holds only for $k=-1,-2,-3$
 since $\bar{\rho}_3=\sqrt{\alpha_1^2+4\alpha_3}/\alpha_1=\sqrt{41}=6.4\ldots$.
 
\subsection{Spectral stability of a front in the Allen-Cahn equation}

\begin{figure}[t]
\centering\includegraphics[scale=0.65]{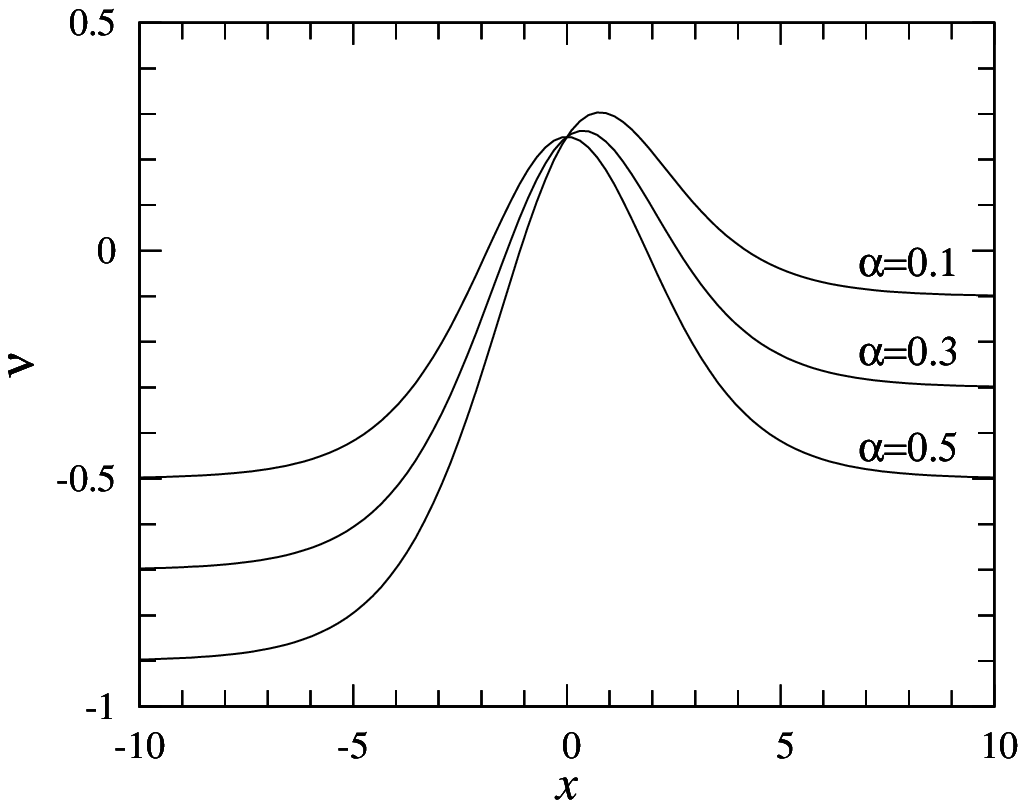}
\caption{Shape of the function $\nu(x)$ in \eqref{eqn:ex2} for $\alpha=0.1,0.3,0.5$.
Note that the corresponding functions for $\alpha$ and $1-\alpha$
 are symmetric about $x=0$.
\label{fig:ex2V}}
\end{figure}

\begin{figure}[t]
\centering\includegraphics[scale=0.57]{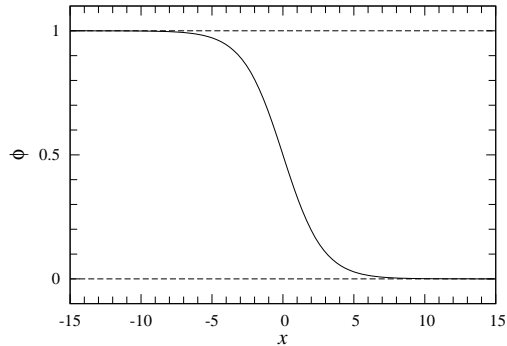}
\caption{Front solution \eqref{eqn:front}
 in the Allen-Cahn equation~\eqref{eqn:AC}.
\label{fig:ex2front}}
\end{figure}

We next consider a case in which
\begin{equation}
\mu(x)=\sqrt{2}(\half-\alpha),\quad
\nu(x)=-3\phi^2(x)+2(\alpha+1)\phi(x)-\alpha,
\label{eqn:ex2}
\end{equation}
where $\alpha$ is a constant
 such that $0<\alpha<1$ and
\begin{equation}
\phi(x)=\frac{1}{e^{x/\sqrt{2}}+1}.
\label{eqn:front}
\end{equation}
For \eqref{eqn:ex2} the eigenvalue problem~(\ref{SLTE},\ref{BC})
 is related to spectral stability
 of a traveling front solution with the velocity $c=\sqrt{2}(\half-\alpha)$,
\[
u(t,x)=\phi(x-ct),
\]
in a PDE called the Allen-Cahn (or Nagumo) equation
\begin{equation}
\frac{\partial u}{\partial t}
 =\frac{\partial^2 u}{\partial x^2}+u(1-u)(u-\alpha).
\label{eqn:AC}
\end{equation}
Asymptotic stability of traveling front solutions in such PDEs
 was studied in \cite{FM77,FM81,C97}
 without solving the associated eigenvalue problem.
Essentially the same eigenvalue problem as \eqref{eqn:ex2} 
 was also considered in \cite{S76,S77}.

We take $f(z)=z(1-z)/\sqrt{2}$
 so that equation~\eqref{eqn:1d} also has a heteroclinic orbit
\[
\gamma(x)=\frac{e^{x/\!\sqrt{2}}}{e^{x/\!\sqrt{2}}+1}
\]
from $z=0$ to $z=1$.
We easily see that assumptions~(A1) and (A2) hold with $z_-=0$, $z_+=1$ and
\[
g(z)=\sqrt{2}(\half-\alpha),\quad
h(z)=-3z^2+2(2-\alpha)z+\alpha-1.
\]
Condition~(i) in Theorem~\ref{thm:main2} holds for $\alpha=\half$
 but conditions~(i)-(v) do not hold for $\alpha\neq\half$ since
\[
\mu_\pm=\sqrt{2}(\half-\alpha),\quad
\nu_-=\alpha-1,\quad
\nu_+=-\alpha.
\]
so that $\mu_\pm>0$ and $\nu_+>\nu_-$ for $\alpha\in(0,\half)$,
 and $\mu_\pm<0$ and $\nu_->\nu_+$ for $\alpha\in(\half,1)$.
We also have
\[
\sup_{x\in\Rset}\nu(x)=\third(\alpha^2-\alpha+1)>0.
\]
See Figs.~\ref{fig:ex2V} and \ref{fig:ex2front}
 for the shapes of the function $\nu(x)$ with $\alpha=0.1,0.3,0.5$
 and the front solution $\phi(x)$.

Equation~\eqref{CE} becomes
\begin{equation}
\label{CEex2}
\psi''+\frac{2(z+\alpha-1)}{z(z-1)}\psi'+\frac{2(h(z)-\lambda)}{z^2(z-1)^2}\psi=0,
\end{equation}
which has only regular singularities at $z=0,1,\infty$.
Solutions of \eqref{CEex2} are expressed by a Riemann $P$ function
 as \eqref{eqn:P} with $z_0=\infty$ and
\begin{align*}
&
\rho_1^\pm=\half(2\alpha-1\pm\sqrt{8\lambda+(2\alpha-3)^2}),\\
&
\rho_2^\pm=\half(1-2\alpha\pm\sqrt{8\lambda+(2\alpha+1)^2}),\quad
\rho_3^+=3,\quad
\rho_3^-=-2.
\end{align*}
Using Proposition~\ref{prop:kimura}, we see that
 the monodromy and differential Galois groups for \eqref{CEex2} are triangularizable
 if and only if
\begin{equation}
\pm\sqrt{8\lambda+(2\alpha-3)^2}\pm\sqrt{8\lambda+(2\alpha+1)^2}=2k
\label{eqn:CONex2a}
\end{equation}
for some combination of the signs, \ie,
\begin{equation}
\lambda=\frac{(k^2-4)(k+1-2\alpha)(k-1+2\alpha)}{8k^2}\in\Rset,
\label{eqn:CONex2b}
\end{equation}
where $k$ is some integer.
By Theorems~\ref{thm:main} and \ref{thm:main2}, Remark~\ref{rmk:b}
 and Proposition~\ref{prop:a}, we obtain the following result.

\begin{theorem}\label{thm:ex2a}
If the boundary value problem \emph{(\ref{SLTE},\ref{BC})} with \eqref{eqn:ex2}
 has a nonzero solution, then one of the following conditions holds:
\begin{enumerate}
\item[(i)]
Condition~\eqref{eqn:CONex2b} holds
 with $\max(\alpha-1,-\alpha)<\lambda<\third(\alpha^2-\alpha+1)$;
\item[(ii)]
$\alpha\in(0,\half)$ and conditions~\emph{(\ref{eqn:con}$_-$)}
 and \eqref{eqn:con+} hold;\\[-2ex]
\item[(iii)]
$\alpha\in(\half,1)$ and conditions~\emph{(\ref{eqn:con}$_+$)}
 and \eqref{eqn:con-} hold.
\end{enumerate}
Moreover, if condition~\emph{(ii)} or \emph{(iii)} holds,
 then the boundary value problem \emph{(\ref{SLTE},\ref{BC})} with \eqref{eqn:ex2}
 has a nonzero solution.
\end{theorem}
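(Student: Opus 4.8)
The plan is to feed the explicit equation~\eqref{CEex2} into the general machinery of Section~3. Since \eqref{CEex2} is Fuchsian on $\Pset^1$ with its only singularities at $z=0,1,\infty$, Remark~\ref{rmk:a} applies: the restriction of \eqref{CEex2} to $\Gamma_\loc$ has a triangularizable differential Galois group if and only if \eqref{CEex2} does over $\Cset(z)$, and by Proposition~\ref{prop:kimura}, together with the exponents listed above, this holds precisely when \eqref{eqn:CONex2a}, equivalently \eqref{eqn:CONex2b}, holds. So the forward implication reduces to showing that, whenever the boundary value problem has a nonzero solution, one can apply Theorem~\ref{thm:main} or Theorem~\ref{thm:main2} to obtain \eqref{eqn:CONex2b} and then locate $\lambda$ by Proposition~\ref{prop:a} and exponential decay --- unless one is in the ``continuous spectrum'' regime already isolated by Remark~\ref{rmk:b}.

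First I would assume that \eqref{SLTE},\eqref{BC} with \eqref{eqn:ex2} has a nonzero solution $\psi$; by the remark following the proof of Lemma~\ref{lem:a} it decays exponentially, so $A_+(\lambda)$ has an eigenvalue with negative real part and $A_-(\lambda)$ has one with positive real part. Then I would split on $\alpha$. If $\alpha=\half$, then $\mu_\pm=0$, condition~(i) of Theorem~\ref{thm:main2} holds, so the Galois group is triangularizable, \eqref{eqn:CONex2b} holds, and $\lambda$ is real; Proposition~\ref{prop:a} gives $\lambda\le\third(\alpha^2-\alpha+1)$ (the strict inequality requiring a small extra argument, or following from the explicit values \eqref{eqn:CONex2b}), while exponential decay forces $\lambda>\max(\nu_+,\nu_-)=\max(\alpha-1,-\alpha)$, which is case~(i). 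If $\alpha\in(0,\half)$, then $\mu_\pm>0$ and $\nu_-<\nu_+$, so none of conditions (i)--(v) of Theorem~\ref{thm:main2} is available; however, as in the proof of Lemma~\ref{lem:a}, the boundedness of $\psi$ together with $\mu_-\ge0$ yields condition~(\ref{eqn:con}$_-$). If in addition (\ref{eqn:con}$_+$) holds, then \eqref{eqn:con} holds for both signs, Theorem~\ref{thm:main} gives \eqref{eqn:CONex2b}, hence $\lambda$ is real, and then (\ref{eqn:con}$_+$) forces $\lambda>\nu_+$ (as $\mu_+\ne0$), so $\lambda>\max(\alpha-1,-\alpha)$, while Proposition~\ref{prop:a} supplies the upper bound: this is case~(i). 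If instead (\ref{eqn:con}$_+$) fails, then (\ref{eqn:con}$_-$) together with \eqref{eqn:con+} and $\alpha\in(0,\half)$ is exactly case~(ii). The range $\alpha\in(\half,1)$ is treated the same way with the roles of $z_+$ and $z_-$ exchanged, conveniently via the symmetry $x\mapsto-x$, $\alpha\mapsto1-\alpha$ of \eqref{eqn:ex2}, and produces case~(i) or case~(iii).

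For the converse, if condition~(ii) holds then $\alpha\in(0,\half)$ gives $\mu_+>0$, $\mu_-\ge0$, $\nu_-<\nu_+$, so together with (\ref{eqn:con}$_-$) and \eqref{eqn:con+} we land exactly in case~(i) of the ``$\lambda$ is an eigenvalue'' part of Remark~\ref{rmk:b}; hence the boundary value problem has a nonzero solution. Symmetrically, condition~(iii) puts us in case~(ii) of that part of Remark~\ref{rmk:b}, giving the same conclusion.

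I expect the main obstacle to be the bookkeeping in the range $\alpha\in(0,\half)$: one must verify that, granted a nonzero solution, the automatic validity of (\ref{eqn:con}$_-$) together with the dichotomy ``(\ref{eqn:con}$_+$) holds or it fails'' partitions the possibilities into precisely case~(i) and case~(ii), and in particular that in the first alternative the fact that $\lambda$ is real already forces $\lambda>\nu_+$, which is what supplies the lower bound in~(i). The degenerate case $\mu_\pm=0$ has to be separated off first, since there the hypothesis~\eqref{eqn:con} of Theorem~\ref{thm:main} degenerates and one must invoke Theorem~\ref{thm:main2}(i) in its place.
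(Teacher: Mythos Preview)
Your proposal is correct and follows essentially the same route as the paper: the paper's entire argument is the one-line sentence ``By Theorems~\ref{thm:main} and \ref{thm:main2}, Remark~\ref{rmk:b} and Proposition~\ref{prop:a}, we obtain the following result,'' together with the preceding use of Proposition~\ref{prop:kimura} to identify triangularizability with \eqref{eqn:CONex2b}. Your write-up simply unpacks this invocation with the appropriate case split on $\alpha$, and your observation that the strict upper bound in~(i) is not literally delivered by Proposition~\ref{prop:a} is a fair caveat that the paper does not address either.
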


\begin{figure}[t]
\centering\includegraphics[scale=0.55]{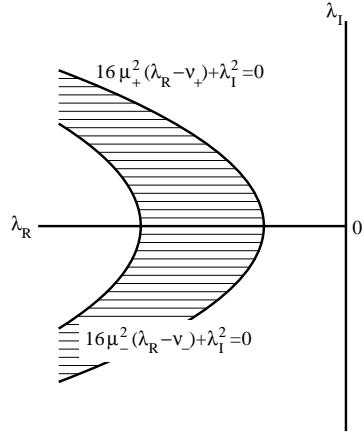}
\caption{Continuous spectra (the shaded region) for \eqref{eqn:ex2}
 when $\alpha\in(0,\half)$.
Note that $\nu_+>\nu_-$ in this case.
\label{fig:ex2cev}}
\end{figure}

See Fig.~\ref{fig:ex2cev} for continuous spectra
 detected in Theorem~\ref{thm:ex2a}(ii) for $\alpha\in(0,\half)$.
A similar picture can be drawn for $\alpha\in(\half,1)$.
Such continuous spectra in Theorem~\ref{thm:ex2a}(ii) and (iii)
 were briefly given in \cite{S76,S77}.

Based on Theorem~\ref{thm:ex2a}(i),
 we compute eigenvalues and eigenfunctions.
Suppose that
\[
\sqrt{8\lambda+(2\alpha-3)^2}+\sqrt{8\lambda+(2\alpha+1)^2}=2k>0,\quad
k\in\Zset.
\]
We set
\[
\eta(z)=z^{-\rho_1^+}(z-1)^{-\rho_2^+}\psi(z),
\]
so that the Riemann $P$ function \eqref{eqn:P} becomes
\begin{align*}
&
z^{-\rho_1^+}(z-1)^{-\rho_2^+}
P\left\{
\begin{matrix}
0 & 1 & \infty\\
\rho_1^+ & \rho_2^+ & 3 & z\\
\rho_1^- & \rho_2^- & -2
\end{matrix}
\right\}\\
&=
P\left\{
\begin{matrix}
0 & 1 & \infty\\
0 & 0 & k+3 & z\\
-\sqrt{8\lambda+(2\alpha-3)^2} & -\sqrt{8\lambda+(2\alpha+1)^2} & k-2
\end{matrix}
\right\}.
\end{align*}
Hence, we obtain the hypergeometric equation \eqref{eqn:hyperg}
 with $a=k+3$, $b=k-2$ and $c=1+\sqrt{8\lambda+(2\alpha-3)^2}$.
Thus, if $k=1,2$, then there exists a bounded solution in \eqref{CEex2} as
\begin{equation}
\psi(z)=z^{\rho_1^+}(1-z)^{\rho_2^+}
 F(k+3,k-2,1+\sqrt{8\lambda+(2\alpha-3)^2};z).
\label{eqn:solex2}
\end{equation}
For the other cases of \eqref{eqn:CONex2a},
 similar computations show that there is no bounded solution in \eqref{CEex2}.
Noting that equation~\eqref{eqn:CONex2b} is not positive for $k=1,2$,
 we prove the following result.

\begin{theorem}\label{thm:ex2b}
If $\lambda=0$ and
\[
\lambda={\textstyle\frac{3}{2}}\alpha(\alpha-1),\quad
\alpha\in(\third,{\textstyle\frac{2}{3}}),
\]
respectively,
 then the boundary value problem \emph{(\ref{SLTE},\ref{BC})} with \eqref{eqn:ex2}
 has nonzero solutions given by
\begin{equation}
\psi(x)=\frac{e^{x/\!\sqrt{2}}}{(e^{x/\!\sqrt{2}}+1)^2}
\label{eqn:solex2a}
\end{equation}
and
\begin{equation}
\psi(x)=\frac{e^{(1-\alpha)x/\!\sqrt{2}}}{e^{x/\!\sqrt{2}}+1}
 \left(1-\frac{1}{1-\alpha}\frac{e^{x/\!\sqrt{2}}}{e^{x/\!\sqrt{2}}+1}\right).
\label{eqn:solex2b}
\end{equation}
\end{theorem}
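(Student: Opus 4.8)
The plan is to deduce Theorem~\ref{thm:ex2b} from the bounded-solution formula \eqref{eqn:solex2} derived just above, by specializing it to the two values of $\lambda$ in the statement and transporting the resulting functions back to the $x$-line through $z=\gamma(x)$.

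First I would identify which integer $k$ in \eqref{eqn:CONex2a} each value corresponds to. Putting $k=2$ into \eqref{eqn:CONex2b} gives $\lambda=0$, while $k=1$ gives $\lambda=\frac{3}{2}\alpha(\alpha-1)$; conversely, for these $\lambda$ one checks directly that $8\lambda+(2\alpha-3)^2$ and $8\lambda+(2\alpha+1)^2$ are perfect squares — namely $(3-2\alpha)^2$ and $(2\alpha+1)^2$ when $\lambda=0$, and $(3-4\alpha)^2$ and $(4\alpha-1)^2$ when $\lambda=\frac{3}{2}\alpha(\alpha-1)$ with $\alpha\in(\third,\frac{2}{3})$ — so that the sign choice $\sqrt{8\lambda+(2\alpha-3)^2}+\sqrt{8\lambda+(2\alpha+1)^2}=2k>0$ underlying \eqref{eqn:solex2} is met with $k=2$ and $k=1$, respectively. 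The corresponding local exponents are then $\rho_1^+=\rho_2^+=1$ in the first case and $\rho_1^+=1-\alpha$, $\rho_2^+=\alpha$ in the second.

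Next I would evaluate the hypergeometric factor $F(k+3,k-2,c;z)$ in \eqref{eqn:solex2}. Since $b=k-2$ equals $0$ for $k=2$ and $-1$ for $k=1$, the series terminates at once: $F(5,0,c;z)\equiv1$ and $F(4,-1,4(1-\alpha);z)=1-\frac{1}{1-\alpha}z$. Hence \eqref{eqn:solex2} reduces to $\psi(z)=z(1-z)$ and to $\psi(z)=z^{1-\alpha}(1-z)^{\alpha}\bigl(1-\frac{1}{1-\alpha}z\bigr)$, respectively; inserting $z=\gamma(x)=e^{x/\!\sqrt2}/(e^{x/\!\sqrt2}+1)$, for which $1-z=1/(e^{x/\!\sqrt2}+1)$, turns these into \eqref{eqn:solex2a} and \eqref{eqn:solex2b}. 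It then remains to confirm that each such $\psi(x)$ is a genuine eigenfunction: it solves \eqref{SLTE} because \eqref{eqn:solex2} solves \eqref{CEex2} and \eqref{CEex2} is \eqref{CE} for this example (alternatively this is a direct substitution — for instance \eqref{eqn:solex2a} is a constant multiple of $\phi'$, and differentiating the equation satisfied by the front $\phi$ shows that $\phi'$ solves \eqref{SLTE} with $\lambda=0$); it is plainly not identically zero; and it satisfies \eqref{BC}, since as $x\to-\infty$ one has $z\to0$ and $\psi$ vanishes like $z$ (resp. like $z^{1-\alpha}$, with $1-\alpha>0$), while as $x\to+\infty$ one has $z\to1$ and $\psi$ vanishes like $1-z$ (resp. like $(1-z)^{\alpha}$, with $\alpha>0$), in both cases exponentially fast in $x$.

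The one point I expect to need genuine care is the parameter restriction $\alpha\in(\third,\frac{2}{3})$ in the second case. This is precisely the range in which $\lambda=\frac{3}{2}\alpha(\alpha-1)$ lies in the admissible window $\max(\alpha-1,-\alpha)<\lambda<\third(\alpha^2-\alpha+1)$ of Theorem~\ref{thm:ex2a}(i): solving $\frac{3}{2}\alpha(\alpha-1)>-\alpha$ for $\alpha<\half$ forces $\alpha>\third$, and $\frac{3}{2}\alpha(\alpha-1)>\alpha-1$ for $\alpha>\half$ forces $\alpha<\frac{2}{3}$. Equivalently, it is the range in which the local exponents at $z=z_\pm$ have real parts of opposite sign, so that Lemma~\ref{lem:c} applies and the terminating solution really is the unique decaying one at each end; outside this range $\lambda=\frac{3}{2}\alpha(\alpha-1)$ falls into the continuous spectrum in the sense of Remark~\ref{rmk:b}, and a different sign choice in \eqref{eqn:CONex2a} would be in force. (For $\lambda=0$ the corresponding exponents $\rho_1^\mp=1,\ 2(\alpha-1)$ at $z_-$ and $\rho_2^\mp=1,\ -2\alpha$ at $z_+$ already have the right sign structure for every $\alpha\in(0,1)$, so no restriction is needed there.) Keeping this bookkeeping straight, together with the elementary verification that \eqref{eqn:solex2a} and \eqref{eqn:solex2b} satisfy \eqref{SLTE} and \eqref{BC}, finishes the proof.
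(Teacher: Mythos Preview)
Your proposal is correct and follows essentially the same route as the paper: specialize the bounded-solution formula \eqref{eqn:solex2} to $k=2$ and $k=1$, use that $b=k-2\in\{0,-1\}$ terminates the hypergeometric series, and pull back via $z=\gamma(x)$ to obtain \eqref{eqn:solex2a} and \eqref{eqn:solex2b}. Your additional checks---the explicit perfect squares, the verification of \eqref{BC}, and the derivation of the window $\alpha\in(\tfrac13,\tfrac23)$ from the inequality $\lambda>\max(\alpha-1,-\alpha)$---are more detailed than what the paper records, but the underlying argument is identical.
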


\begin{proof}
When $k=1$, we have
\[
\lambda={\textstyle\frac{3}{2}}\alpha(\alpha-1),
\]
by \eqref{eqn:CONex2b} and $\alpha\in(\third,{\textstyle\frac{2}{3}})$
 since $\lambda>\max(\alpha-1,-\alpha)$.
Hence, we obtain
\[
\rho_1^+=1-\alpha,\quad
\rho_2^+=\alpha,\quad
c=4-4\alpha,
\]
and write \eqref{eqn:solex2} as
\[
\psi(z)=z^{1-\alpha}(1-z)^\alpha\left(1-\frac{z}{1-\alpha}\right),
\]
which yields \eqref{eqn:solex2b} by $z=\gamma(x)$.
On the other hand, when $k=2$,
 we have $\lambda=0$ and $\rho_1^+=\rho_2^+=1$,
 so that equation~\eqref{eqn:solex2} becomes
\[
\psi(z)=z(1-z),
\]
which yields \eqref{eqn:solex2a}.
\end{proof}

\begin{remark}
The eigenfunction~\eqref{eqn:solex2a} for $\lambda=0$ can be written as
\[
\psi(x)=-\sqrt{2}\,\frac{\d\phi}{\d x}(x).
\]
The existence of this eigenfunction is also guaranteed
 by the invariance of the PDE \eqref{eqn:AC} under the group of translations
 $x\mapsto x+x_0$, $x_0\in\Rset$.
\end{remark}

\begin{figure}[t]
\centering\includegraphics[scale=0.65]{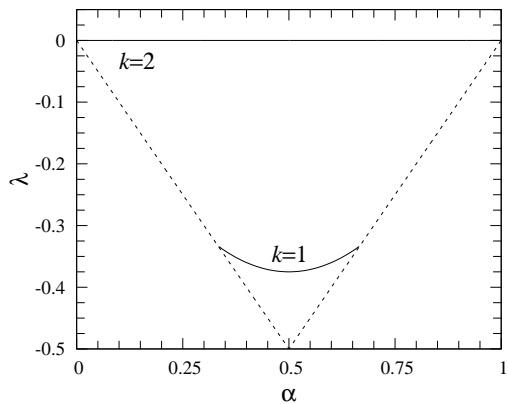}
\caption{Eigenvalues given by \eqref{eqn:CONex2b} for $k=1,2$
 in the case of \eqref{eqn:ex2}.
The dotted line represents the lower bound $\lambda=\max(\alpha-1,-\alpha)$.
\label{fig:ex2ev}}
\end{figure}

\begin{figure}[t]
\includegraphics[scale=0.47]{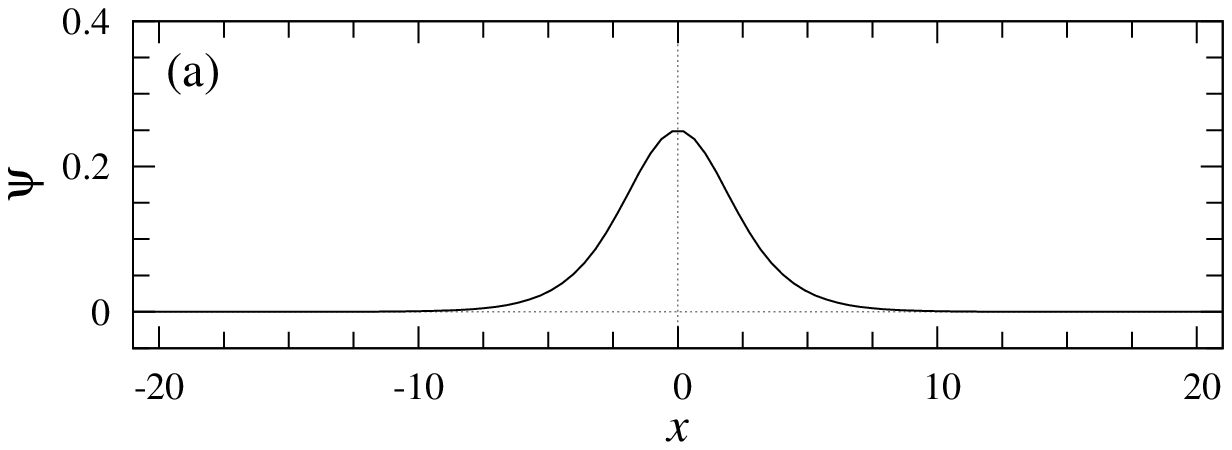}\quad
\includegraphics[scale=0.47]{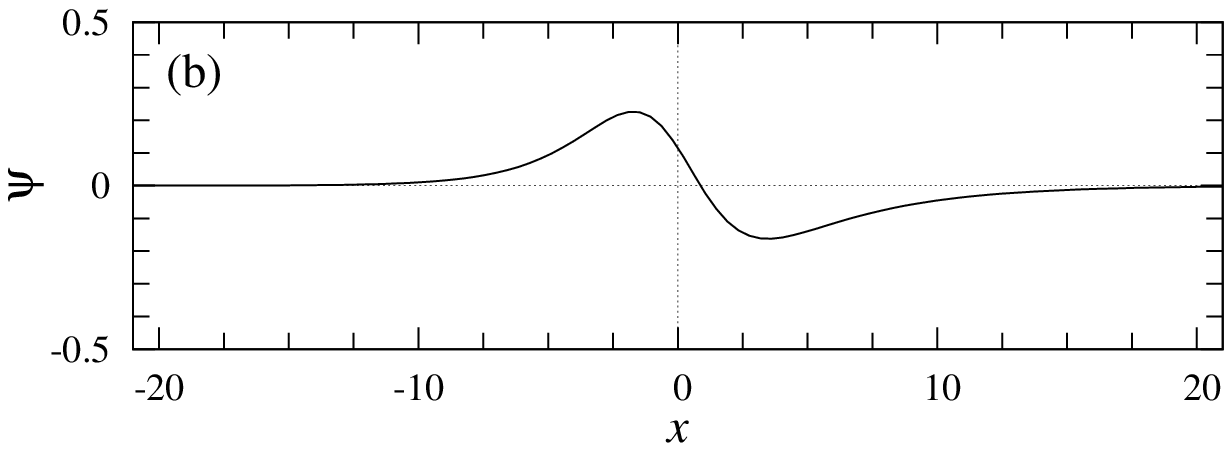}\\[2ex]
\includegraphics[scale=0.47]{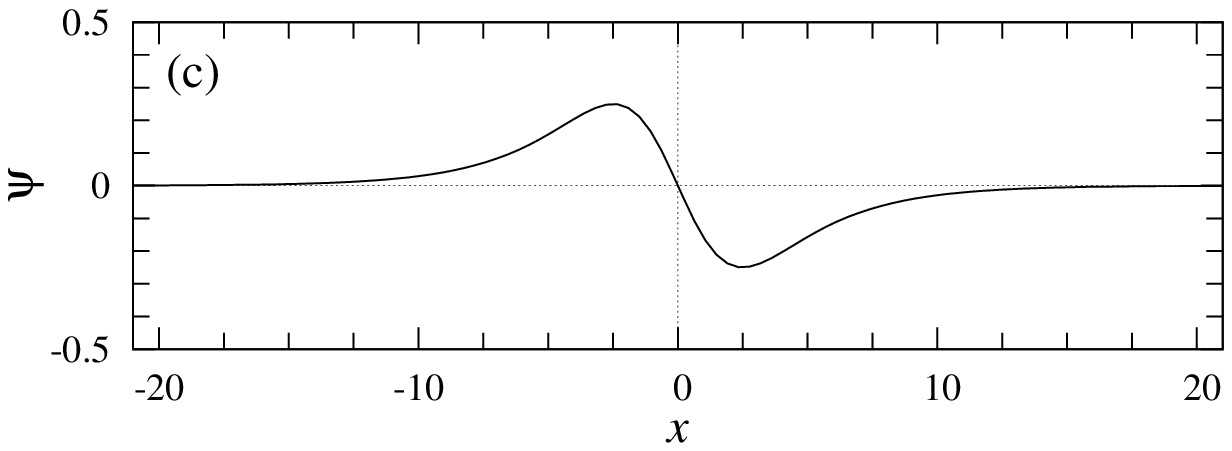}\quad
\includegraphics[scale=0.47]{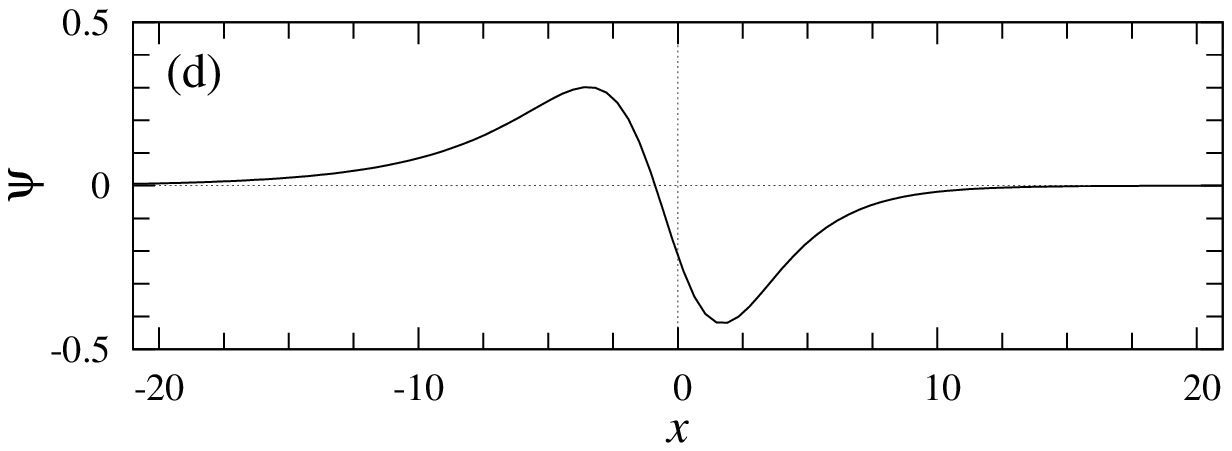}
\caption{Eigenfunctions for \eqref{eqn:ex2}:
(a) $\lambda=0$;
(b) $\alpha=0.35$;
(c) $0.5$;
(d) $0.65$.
Plates~(b)-(c) show the functions for $\lambda=\frac{3}{2}\alpha(\alpha-1)$.
\label{fig:ex2}}
\end{figure}

Eigenvalues and eigenfunctions for \eqref{eqn:ex2} are plotted
 in Figs.~\ref{fig:ex2ev} and \ref{fig:ex2}, respectively.
Note that there exist continuous real spectra
 between $\max(\alpha-1,-\alpha)$ and $\min(\alpha-1,-\alpha)$.
Nonzero discrete eigenvalues and the associated eigenfunctions
 in this eigenvalue problem were not given in \cite{S76,S77}.


\noindent{\sc Kazuyuki~Yagasaki \\
Mathematics Division, \\
Department of Information Engineering, \\
Niigata University,\\
Niigata 950-2181, Japan} \\
{\tt yagasaki@ie.niigata-u.ac.jp}

\medskip

\noindent{\sc David~Bl\'azquez-Sanz \\
Mathematics Division, \\
Department of Information Engineering, \\
Niigata University,\\
Niigata 950-2181, Japan} \\
\smallskip
Permanent adress: \\
{\sc Escuela de Matem\'aticas,\\
Sergio Arboleda University,\\
Bogot\'a, Colombia \\
}
{\tt david@ima.usergioarboleda.edu.co}

\end{document}